%%\pause
\documentclass[11pt]{amsart}
\usepackage{amscd, amssymb, mathrsfs, mathabx, 
dsfont, xcolor, %mathtools, 
amsmath}
%\DeclareMathAlphabet{\mathpzc}{OT1}{pzc}{m}{it}
%\usepackage{kotex}
%\usepackage{showkeys}
%\usepackage[backref=page]{hyperref}   
\usepackage{hyperref}   
\usepackage{collectbox}

\makeatletter

\makeatother

\input{xy}
\xyoption{all}

\newtheorem{Thm}{Theorem}[section]

\newtheorem{Prop}[Thm]{Proposition}
\theoremstyle{definition}
\newtheorem{Def}[Thm]{Definition}
\newtheorem{Def/Thm}[Thm]{Definition/Theorem}
\newtheorem{Cor}[Thm]{Corollary}
\newtheorem{lemma}[Thm]{Lemma}

\theoremstyle{remark}
\newtheorem{Rmk}[Thm]{Remark}

%\theoremstyle{EG}

%%%%%%%%%%% symbols %%%%%%%%%

\numberwithin{equation}{section}
\newcommand{\ti }{\times}
\newcommand{\ot }{\otimes}
\newcommand{\ra }{\rightarrow}

%%%%%%%%%%%%%%%%%%%%%%%%%%%%%%%%%%%
%%%%%%%%%%%%%%%%%%%%%%%%%%%%%%%%
%%%%%%%%%%%%% mathrm %%%%%%%%%%%%%
%%%%%%%%%%%%%%%%%%%%%%%%%%%

\newcommand{\Hom }{{\mathrm{Hom}}}
\newcommand{\tr }{{\mathrm{tr}}}

\newcommand{\Spec}{{\mathrm{Spec}}}

\newcommand{\cA}{{\mathcal{A}}}
\newcommand{\cO}{{\mathcal{O}}}

\newcommand{\cQ}{{\mathcal{Q}}}

%%%%%%%%%%%%%%%%%%%%%%%%%%%%%%%%%%%%%%%%%
%%%%%%%% Bbb %%%%%%%%%%
%%%%%%%%%%%%%%%%%%%%%%%

\newcommand{\HH}{{\mathbb H}}

\newcommand{\PP }{{\mathbb P}}
\newcommand{\GG }{{\mathbb G}}

\newcommand{\CC }{{\mathbb C}}
\newcommand{\ZZ }{{\mathbb Z}}
\newcommand{\RR }{{\mathbb R}}

%%%%%%%%%%%%%%%%%%%%%%%%%%%%%
%%%%%%%%%%%%% Greek letters %%%%%%%%
%%%%%%%%%%%%%%%%%%%%%%%%%%%%%%%%%

\newcommand{\kb }{{\beta}}
\newcommand{\ka }{{\alpha}}

%\newcommand{\kl }{{z}}

%%%%%%%%%%%%%%%%%%%%%%%%%%%%%%%%%%%
%%%%%%%%%%%%%%%%%%%%%%%%%%%%%%%%%%
\newcommand{\ch}{\mathrm{ch}}

%%%%%%%%%%%%%%%%
\newcommand{\lan}{\langle}
\newcommand{\ran}{\rangle}

%%%%%%%%%%%%%%%%

%%%%%%%%%%%%%%%%%%%

%%%%%%%%%%%%%%%%%%%%%%%%

%%%%%%%%%%%%%%%%%%%%%%%%%

\newcommand{\LL}{\mathbb{L}}

%%%%%%%%%%%%%%%%%%%%%
%%%%%%%%%%%%%%%%%

%\newcommand{\Bbbk}{\CC}

%\newcommand{\Example}{\noindent{\bf Example }}

%\newcommand{\fm}{\mathfrak{m}}

\newcommand{\td}{\mathrm{td}}
\newcommand{\Ch}{\mathrm{Ch}}
\newcommand{\AAA}{\mathbb{A}}

\newcommand{\cB}{\mathcal{B}}

%\newcommand{\cC}{\mathcal{C}}
%%%%%%%%%%%%%%%%%%%%%%%%%%%%%

\newcommand{\End}{\mathrm{End}}

\newcommand{\str}{\mathrm{str}}

\newcommand{\Perf}{\mathrm{Perf}}

\newcommand{\Com}{\mathrm{Com}}

\newcommand{\vC}{\text{\rm \v{C}}}

\newcommand{\Om}{\Omega ^{\bullet}}

\newcommand{\fU}{\mathfrak{U}}

\newcommand{\MFdg}{D_{dg}}

\newcommand{\Kun}{\text{K\" unn}}

%\usepackage{multicol}
%\setlength{\columnsep}{1cm}
%\usepackage{comment}
%
%\setlength{\columnseprule}{.5pt}
%\def\columnseprulecolor{\color{blue}}

%\usepackage{xymatrix}

%\input{xy}
%\xyoption{all}

\begin{document}
\title{Hirzebruch-Riemann-Roch for global matrix factorizations}  

\begin{abstract}We prove a Hirzebruch-Riemann-Roch type formula for global matrix factorizations.
This is established by an explicit realization of the abstract Hirzebruch-Riemann-Roch type formula 
of Shklarov. We also show a Grothendieck-Riemann-Roch type theorem.
\end{abstract}

\author[B. Kim]{Bumsig Kim}
\address{Korea Institute for Advanced Study\\
85 Hoegiro, Dongdaemun-gu \\
Seoul 02455\\
Republic of Korea}
\email{bumsig@kias.re.kr }
%%\address{KIAS\\
%%Seoul\\
%%Korea}

%\date{\today}

\thanks{The work is supported by KIAS individual grant MG016404.}

\subjclass[2010]{Primary 14A22; Secondary 16E40, 18E30}

\keywords{Hirzebruch-Riemann-Roch, Matrix factorizations, Hochschild homology, Hodge cohomology, Pushforward}

\maketitle

%\tableofcontents

\section{Introduction}\label{sec: intro}
Let $k$ be a field of characteristic zero and let
$\GG$ be either the group $\ZZ$ or $\ZZ/ 2$.
We consider  a $\GG$-graded dg enhancement  $\MFdg (X, w)$ of the derived category  of matrix factorizations for $(X, w)$.
Here $X$ is an $n$-dimensional nonsingular variety over $k$ and $w$ is a regular function on $X$.
An object of $\MFdg (X, w)$ is a $\GG$-graded vector bundle $E$ on  $X$ equipped with
a degree $1$, $\cO_X$-linear homomorphism $\delta _E : E \to E$ such that $\delta _E ^2 = w \cdot \mathrm{id}_E$.
The structure sheaf $\cO_X$ is by definition $\GG$-graded but concentrated in degree $0$.
The degree of $w$ is 2. If $w$ is nonzero, then $\GG$ is forced to be $\ZZ/2$.
Assume that the critical locus of $w$ is set-theoretically in $w^{-1}(0)$ and proper over $k$.

The Hochschild homology $HH _* (\MFdg (X, w) )$  of $\MFdg (X, w)$ is naturally isomorphic to \[\HH ^{-*} (X, (\Omega ^{\bullet} _{X}, -dw )) ; \]
see \cite{LP, Platt}.
The isomorphism is called the Hochschild-Kostant-Rosenberg (in short HKR) type isomorphism, denoted by $I_{HKR}$.
Here  $(\Omega ^{\bullet} _X, -dw )$ is a $\GG$-graded complex $\bigoplus _{p\in \ZZ} \Omega_X ^p [p]$ with the differential $-dw\wedge$.
Let $\ch (E) \in \HH ^{0} (X, (\Omega ^{\bullet} _X, -dw )) $ be
the image of the categorical Chern character  $\Ch (E) \in HH _0 (\MFdg (X, w) )$ of $E$ under $I_{HKR}$. 
In this paper we prove the following Hirzebruch-Riemann-Roch type formula.

\begin{Thm}\label{thm: main} For matrix factorizations $P$ and  $Q$ in $\MFdg (X, w)$ we have
\begin{equation}\label{eqn: main}     \sum_{i \in \GG}  (-1)^i \dim  \RR ^i \Hom (P, Q)   
    =   (-1)^{ \binom{n+1}{2}}  \int_X \ch (P) ^{\vee} \wedge \ch (Q) \wedge \td (X)  ,  \end{equation}
where $\td (X) \in \oplus _{p\in \ZZ} H ^0 (X, \Omega ^p _{X} [p])$ is the Todd class of $X$.
\end{Thm}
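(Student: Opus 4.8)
The plan is to deduce \eqref{eqn: main} from Shklyarov's abstract Hirzebruch-Riemann-Roch identity for smooth and proper dg categories and then to make every term explicit by transporting it through the HKR isomorphism $I_{HKR}$. Shklyarov's theorem equips $HH_*(\MFdg(X,w))$ with a canonical (Mukai) pairing $\langle -,-\rangle$ and produces the categorical Chern character $\Ch$, and it asserts the boundary-bulk identity
\[
\sum_{i\in\GG}(-1)^i\dim\RR^i\Hom(P,Q)=\langle\Ch(P),\Ch(Q)\rangle .
\]
Since the left-hand side here is exactly the left-hand side of \eqref{eqn: main}, the problem reduces to two tasks: (i) identify $\Ch(E)$, and (ii) compute $\langle-,-\rangle$ after transport along $I_{HKR}$. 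Task (i) is immediate, since $\ch(E)$ is by definition $I_{HKR}(\Ch(E))$; thus the entire content of the theorem is to show that, under $I_{HKR}$, the canonical pairing becomes
\[
(\alpha,\beta)\longmapsto(-1)^{\binom{n+1}{2}}\int_X\alpha^\vee\wedge\beta\wedge\td(X)
\]
on $\HH^{-*}(X,(\Omega^\bullet_X,-dw))$.

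To prove (ii) I would realize the canonical pairing geometrically. The opposite category satisfies $\MFdg(X,w)^{op}\simeq\MFdg(X,-w)$, and the external product yields an equivalence $\MFdg(X,w)\otimes\MFdg(X,-w)\simeq\MFdg(X\times X,\pr_1^*w-\pr_2^*w)$. Combined with the K\"unneth isomorphism for Hochschild homology, this factors the canonical pairing through $HH_*(\MFdg(X\times X,\pr_1^*w-\pr_2^*w))$, which $I_{HKR}$ on $X\times X$ identifies with the twisted Hodge cohomology of the potential $\pr_1^*w-\pr_2^*w$. Under this identification the Mukai pairing is computed by restricting the K\"unneth class to a formal neighborhood of the diagonal, pairing against the HKR image of the diagonal matrix factorization (the kernel of the identity functor), and applying the trace map attached to properness of the critical locus. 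Locally this recovers the Kapustin-Li residue pairing, and the global trace is honest integration $\int_X$. The Todd class enters at precisely this point: as in C\u{a}ld\u{a}raru's analysis of the Mukai pairing on $D^b(X)$, the HKR image of the diagonal kernel differs from the naive diagonal embedding by the Todd class of $X$, and this discrepancy is exactly the factor $\td(X)$ in \eqref{eqn: main}; the dualization $(-)^\vee$ arises from the comparison isomorphism $HH_*(\MFdg(X,w))\cong HH_*(\MFdg(X,-w))$.

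The step I expect to be the main obstacle is the precise bookkeeping in (ii), where two issues must be controlled. First, one must verify that the abstract trace map genuinely coincides with integration of twisted Hodge classes over $X$, and not merely with integration up to an unidentified scalar; this requires a compatibility between the HKR isomorphisms of \cite{LP,Platt} on $X$ and on $X\times X$ and the K\"unneth decomposition, together with an explicit identification of the diagonal class and its Todd-twist. Second, one must assemble the global sign $(-1)^{\binom{n+1}{2}}$, which accumulates from the Koszul signs built into $I_{HKR}$ through the shifts $\Omega^p_X[p]$, from the signs in the K\"unneth map, and from the orientation of the residue/trace in top Hodge degree. Once the compatibility of traces and the Todd-twisted diagonal computation are in place and these signs are tracked through the comparisons, specializing the abstract identity to the objects $P$ and $Q$ yields \eqref{eqn: main}.
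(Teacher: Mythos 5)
Your top-level reduction coincides with the paper's: both routes pass through Shklyarov's abstract HRR identity and reduce the theorem to an explicit description of the canonical pairing under $I_{HKR}$ (this is exactly the paper's Theorem~\ref{thm: exp pairing}), with the identification $\MFdg(X,w)^{op}\cong\MFdg(X,-w)$ and the diagonal kernel on $X\times X$ playing the central role. The gap is in how you propose to carry out that computation. You appeal to C\u{a}ld\u{a}raru's analysis of the Mukai pairing on $D^b(X)$ for the statement that the HKR image of the diagonal kernel differs from the naive diagonal class by $\td(X)$; but that result lives in the $\ZZ$-graded, $w=0$ world, and there is no off-the-shelf analogue for $\ZZ/2$-graded matrix factorizations with nonzero potential, where the relevant classes live in twisted Hodge cohomology supported on the critical locus and the diagonal kernel is a matrix factorization $\cO^{\widetilde{w}}_{\Delta_X}$ of $\pr_1^*w-\pr_2^*w$ rather than a coherent sheaf. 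Proving precisely this twisted statement is the actual content of the theorem, and the paper does it by a geometric argument your plan does not contain: deformation of $X\times X$ to the normal cone of the diagonal, under which $\cO^{\widetilde{w}}_{\Delta_X}$ degenerates to the Koszul factorization $\mathrm{Kos}(s)$ on $T_X$, followed by the fiberwise computation $\int_\pi \ch(\mathrm{Kos}(s))\wedge\td(\pi^*T_X)=(-1)^{\binom{n+1}{2}}$, reduced via $\PP(T_X\oplus\cO_X)$ to $\int_{\PP^n}c_n(\cQ)$ on projective space. That computation is also where the global sign comes from; in your write-up the sign remains an unresolved "accumulation of Koszul signs."

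Second, the obstacle you flag as the main one --- verifying that the abstract trace $HH_*(\Perf k)\cong k$ genuinely coincides with integration $\int_X$ and not with it up to a scalar --- is a difficulty the paper deliberately sidesteps, and attacking it head-on is the harder route. The paper never unwinds $\Delta_*$ and the supertrace directly. Instead it uses Proposition~\ref{prop: char can}: since $\cA=\MFdg(X,w)$ is smooth and proper, the canonical pairing is the \emph{unique} nondegenerate pairing satisfying \eqref{eqn: char pairing}, where $\Ch(\Delta_\cA)=\sum_i T^i\otimes T_i$ is the K\"unneth decomposition of the Chern character of the diagonal bimodule. One then needs only (a) nondegeneracy of the candidate pairing $(-1)^{\binom{n+1}{2}}\int_X(\cdot\wedge\cdot\wedge\td(X))$, which follows from Serre duality, and (b) verification of \eqref{eqn: char pairing} via the normal-cone computation above. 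Your proposal contains neither the uniqueness characterization nor the Serre-duality nondegeneracy input, so even granting your Todd-class claim, the plan as written does not close the argument.
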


We explain notation in the above theorem.
Firstly, the operation $\wedge$ is the wedge product inducing 
\begin{equation} \label{eqn: wedge}  \HH ^{*} (X, (\Omega ^{\bullet} _X, -dw )) \ot \HH ^{*} (X, (\Omega ^{\bullet} _X, dw )) 
\ot  \HH^{*}(X, (\Omega ^{\bullet} _X, 0)) \xrightarrow{wedge }  \oplus _p H^{*} _c(X, \Omega ^p _X[p]) ; \end{equation}
see  \ref{def: Wedge} for details.
Secondly, $ \int_X$ is the composition 
\begin{equation} \label{eqn: theta}\oplus _{p\in \ZZ} H^{*} _c(X, \Omega ^p _X[p]) \xrightarrow{proj} 
 H^0_c (X, \Omega ^n _X [n]) \xrightarrow{\tr _X} k \end{equation} 
  of the projection and  the canonical trace map $\tr _X$ for the properly supported cohomology; see \S~\ref{def int}. Thirdly,  $\vee$ is induced from 
  a chain map
\[ (\Omega ^{\bullet}_X,  dw ) \to (\Omega ^{\bullet}_X, -dw), \text{ defined by } (-1)^p\mathrm{id}: \Omega ^p _X \to \Omega ^p _X \]
in each component.

For a proper dg category $\cA$ there is an abstract Hirzebruch-Riemann-Roch formula \eqref{eqn: abs HRR} due to Shklyarov \cite{Shk: HRR}. 
By an explicit  realization of the formula for $\cA = \MFdg (X, w) $ we will obtain Theorem~\ref{thm: main}.
Let \[\lan , \ran _{can} : HH_* (\cA) \ot HH_* (\cA^{op}) \to k \]  be
the so-called  canonical pairing for $\cA$. Here $\cA ^{op}$ is the opposite category of $\cA$ and there is
an isomorphism  $\cA ^{op} \cong \MFdg (X,- w)$; see  \S~\ref{sub: geom diag}. This yields
\begin{equation}\label{diag: vv} \xymatrix{   HH_* (\cA ) \ar[r]_(.4){I_{HKR}} \ar[d]_{\vee} &  \HH ^{-*} (X, (\Om _X , - dw )) \ar[d]_{\vee}  \\
               HH_* (\cA ^{op}) \ar[r]_(.4){I_{HKR}} &  \HH ^{-*} (X, (\Om _X ,  dw )) , } \end{equation}
               where the left $\vee$ is defined to make the diagram commute. 
Shklyarov's formula says that the left-hand side of \eqref{eqn: main}
is equal to $\lan \Ch (Q), \Ch (P)^{\vee} \ran_{can}$. 
% where $\vee$ is the corresponding dual map in Hochschild homology (see \S~\ref{sub: dual}).
  Therefore Theorem~\ref{thm: main} is reduced to an explicit realization
of the pairing.  % under the commuting diagram of isomorphisms

               \begin{Thm}\label{thm: exp pairing} 
The canonical pairing $\lan , \ran_{can}$ under $I_{HKR}$
%\[ HH_* (\cA ) \ot HH_* (\cA ^{op} ) \to  \HH^{-*} (\Omega ^{\bullet}_X, -dw) \ot \HH ^{-*} (\Omega ^{\bullet}_X, dw)  \]
corresponds to \[  (-1)^{\binom{n+1}{2}}  \int _X (\cdot \wedge \cdot \wedge \td (X)) . \]
\end{Thm}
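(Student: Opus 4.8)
The plan is to unwind Shklyarov's canonical pairing into an explicit computation on the product $X\times X$ and to match it with $\int_X(\,\cdot\wedge\cdot\wedge\td(X))$ term by term. For the smooth and proper dg category $\cA=\MFdg(X,w)$ --- smooth because $X$ is nonsingular, proper because $\mathrm{Crit}(w)$ is proper over $k$ --- the pairing $\lan\,,\,\ran_{can}$ factors as a Künneth map followed by a trace,
\[ HH_*(\cA)\ot HH_*(\cA^{op}) \xrightarrow{\ \Kun\ } HH_*(\cA\ot\cA^{op}) \xrightarrow{\ \tau\ } k, \]
where $\tau$ is induced by the diagonal bimodule $\cA_\Delta$, perfect over $\cA^e:=\cA\ot\cA^{op}$ by smoothness and traceable by properness. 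Using $\cA^{op}\cong\MFdg(X,-w)$ and the external-product functor $\MFdg(X,w)\ot\MFdg(X,-w)\hookrightarrow\MFdg(X\times X,\,w\boxminus w)$, where $w\boxminus w:=w\ot 1-1\ot w$, both maps acquire geometric content: $\Kun$ becomes an exterior product of twisted Hodge cohomologies, and $\tau$ becomes a trace built from the diagonal matrix factorization $\cA_\Delta$ of $(X\times X,\,w\boxminus w)$, whose cohomology is supported along the diagonal.

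First I would show that $I_{HKR}$ is multiplicative for exterior products, i.e.\ that the square
\[ \xymatrix@C=1.1em{
HH_*(\cA)\ot HH_*(\cA^{op}) \ar[r]^-{\Kun} \ar[d]_{I_{HKR}\ot I_{HKR}} & HH_*(\cA\ot\cA^{op}) \ar[d]^{I_{HKR}} \\
\HH^{-*}(X,(\Om_X,-dw))\ot\HH^{-*}(X,(\Om_X,dw)) \ar[r]^-{\boxtimes} & \HH^{-*}(X\times X,(\Om_{X\times X},-d(w\boxminus w))) } \]
commutes, the bottom arrow being the external wedge. This is verified on the explicit Hochschild cochain models of \cite{LP, Platt}: both sides are assembled from (anti)symmetrized chains equipped with the Koszul-type differential, and the claim reduces to the naturality of the HKR quasi-isomorphism under products of schemes together with multiplicativity of the local HKR map. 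Composing with restriction to the diagonal then produces precisely the wedge product \eqref{eqn: wedge}, so that after this step the theorem is reduced to identifying $\tau$, transported through $I_{HKR}$, with the composite $\int_X$ of \eqref{eqn: theta} twisted by $\td(X)$ and a universal sign.

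The core and most delicate step is this geometric evaluation of $\tau$, where the Todd class is produced. I would realize $\cA_\Delta$ as the Koszul matrix factorization resolving $\Delta_*\cO_X$: writing $w\boxminus w=\sum_i(x_{1,i}-x_{2,i})\,h_i$ locally, one forms the Koszul MF on $X\times X$ supported on the diagonal. Transporting its categorical Chern character through $I_{HKR}$ and restricting to the diagonal, a standard Grothendieck--Riemann--Roch computation for the regular embedding $\Delta\colon X\hookrightarrow X\times X$ --- using $N_\Delta\cong T_X$ and the Koszul identity $\ch(\Delta^*\Delta_*\cO_X)=c_{\mathrm{top}}(N_\Delta)\,\td(N_\Delta)^{-1}$, hence $\td(X)=\td(N_\Delta)$ --- shows that $\tau$ collapses to $\tr_X$ on $H^0_c(X,\Omega^n_X[n])$, preceded by the projection to top degree, with $\td(X)$ inserted. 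This robustly accounts for the structure $\int_X(\,\cdot\wedge\cdot\wedge\td(X))$; the genuine obstacle is the overall sign $(-1)^{\binom{n+1}{2}}$. The global Koszul/GRR argument pins the formula down only up to a universal sign $c_n$ depending on $n=\dim X$, with expected contributions $\binom{n}{2}$ from the Koszul signs in (anti)symmetrizing the $n$-variable diagonal resolution and reordering its generators, and a further $n$ from the normalization of the categorical Serre trace against $\tr_X$. To evaluate $c_n$ exactly I would specialize to the local model $X=\Spec k[x_1,\dots,x_n]$ with an isolated critical point, where $\tau$ becomes the Kapustin--Li residue pairing and $c_n=(-1)^{\binom{n+1}{2}}$ can be read off the $n$-fold residue, and then return to the global setting via the naturality established above.
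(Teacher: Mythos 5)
Your outline shares the paper's skeleton --- first the K\"unneth/HKR compatibility (this is the commutativity of the paper's diagram \eqref{diag: big comm}, proved there by Mayer--Vietoris reduction to curved algebras), then a geometric evaluation of the diagonal's contribution --- but your core step has two genuine gaps. (a) The object you propose to compute with does not exist in general: local Koszul factorizations built from presentations $w\ot 1-1\ot w=\sum_i(x_{1,i}-x_{2,i})h_i$ do not glue to a global matrix factorization on $X\times X$, because the diagonal of a general smooth variety is not the zero scheme of a regular section of a rank-$n$ vector bundle on $X\times X$ (the ``diagonal property'' is rare). This is precisely why the paper deforms $X\times X$ to the normal cone $T_X$: only there does the tautological section $s$ of $\pi^*T_X$ give a genuine global Koszul factorization $\mathrm{Kos}(s)$, and transporting the integral from the fiber $X\times X$ to the fiber $T_X$ is itself nontrivial (Tor-independence \eqref{eqn: Tor}, functoriality of $\ch$, and the base-change Lemma~\ref{lem: fiber base}). (b) Even granting a global model, the self-intersection identity $\ch(\Delta^*\Delta_*\cO_X)=c_{n}(N_\Delta)\,\td(N_\Delta)^{-1}$ computes only the \emph{restriction} of the Chern character to the diagonal, and the restriction does not determine the pairing: to collapse $\int_{X\times X}(\gamma\ot\gamma')\wedge\ch(\cO^{\widetilde{w}}_{\Delta_X})\wedge(\td(X)\ot\td(X))$ to an integral over $X$ you need GRR for the closed embedding $\Delta_X\hookrightarrow X\times X$ in \emph{pushforward} form, $\ch(\cO^{\widetilde{w}}_{\Delta_X})=\pm\,\Delta_*\bigl(\td(T_X)^{-1}\bigr)$ in twisted Hodge cohomology with supports, together with a projection formula. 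In this $\ZZ/2$-graded twisted setting no such statement is ``standard'': the paper has to construct the pushforwards and prove their functoriality \eqref{eqn: fun}, projection formula \eqref{eqn: proj}, and base change \eqref{eqn: smooth base} from scratch, and the needed identity is exactly what the deformation argument (the steps $(\dagger)$ and $(\dagger\dagger)$ in \S~\ref{pf exp pairing}) establishes. So the sentence ``a standard GRR computation \dots shows that $\tau$ collapses to $\tr_X$'' is where the actual content of the theorem is being assumed rather than proved.

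The sign-fixing strategy inherits this problem. Reading off $(-1)^{\binom{n+1}{2}}$ from the local isolated-singularity model via Brown--Walker is legitimate only after one proves that the global argument determines the pairing up to a constant depending \emph{only} on $n$, uniformly in $(X,w)$; since your GRR step is black-boxed, that universality claim is unverified. It also inverts the paper's logic: the paper computes the sign self-containedly --- \eqref{eqn: rel comp} is reduced by compactification to $\PP(E\oplus\cO_X)$ and base change \eqref{eqn: smooth base} to the \v{C}ech computation $\int_{\PP^n}c_n(Q)=(-1)^{\binom{n+1}{2}}$ of \eqref{eqn: comp} --- and then obtains the local residue formula (Shklyarov's conjecture, Brown--Walker, Polishchuk--Vaintrob) as a corollary rather than as an input. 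One further structural difference worth noting: instead of identifying the trace functional directly as you propose, the paper invokes nondegeneracy (Serre duality) and the uniqueness characterization \eqref{eqn: char pairing} of the canonical pairing, which reduces the whole theorem to a single integral identity against $\ch(\cO^{\widetilde{w}}_{\Delta_X})$ via \eqref{eqn: ch diag}; your direct-trace plan could be made to work, but it requires exactly the same geometric input that is missing above.
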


       The following formula for $\ch (E)$ is established in 
        \cite{CKK, KP, Platt}. Let $\fU = \{ U_i \}_{i \in I}$ be an affine open covering of $X$ and let $\nabla _i$ be a connection of $E|_{U_i}$.
In the \v{C}ech hypercohomology  $\check{\HH} ^* (\fU, (\Om _X, (-1)^i dw\wedge ))$
\[ \ch (E) = \str \exp (-([\nabla _i, \delta _E])_{i\in I} - (\nabla _i - \nabla _j)_{i, j \in I, i < j }) . \]
Here $\str$ means the supertrace and the products in the exponential are  Alexander-\v{C}ech-Whitney cup products in the \v{C}ech complex
$\vC ^* (\fU, (\End (E) \ot \Om _X))$; see \cite{CKK} for details.

Remarks on others' related works are in order.
When $\GG = \ZZ$ and $X$ is a projective variety over $\CC$, 
there is a natural isomorphism between Hodge cohomology and the singular (or equivalently $C^{\infty}$-de Rham) cohomology of the associated complex manifold $X^{an}$:   
$\oplus _{p, q} H^q (X, \Omega ^p_X[p]) \overset{\phi}{\ra}  H^{-\bullet} (X^{an}, \CC)$.
Let $\mathrm{tw}$ be an automorphism of  $\oplus _{p, q} H^q (X, \Omega ^p_X[p]) $ sending 
a $(p, q)$-form $\gamma ^{p, q}$ to $(\frac{1}{2\pi i })^{p} \gamma ^{p, q}$, then
$\phi (\mathrm{tw} ( \ch (E)))$ coincides with the topological Chern character  $\ch _{top} (E)$ of $E$. 
The right-hand side of \eqref{eqn: main} becomes \[ \int _{X^{an}} \ch _{top }(P^{\vee}) \ch _{top} (Q) \td _{top}(X) . \]
Here $\int _{X^{an}}$ denotes the usual integration and $\td _{top} (X)$ is the usual Todd class of $X^{an}$.
Hence Theorem \ref{thm: main} is the usual Hirzebruch-Riemann-Roch theorem \cite{Hir}.
When $\GG = \ZZ$ and $k = \CC$, Theorem \ref{thm: main} is the O'Brian -- Toledo -- Tong  theorem for algebraic coherent sheaves \cite{OTT}.
When $\GG = \ZZ$, Theorem  \ref{thm: main} and its generalization Corollary \ref{cor: gen}  coincide with Theorem 4 of Markarian \cite{Mark} and 
some works of C\u{a}ld\u{a}raru -- Willerton \cite{CW} and Ramadoss \cite{Ram: HRR},  respectively.

When $\GG = \ZZ /2$,  $X$ is an open subscheme of $\AAA ^n _k $ containing the origin, and $w$ has only one singular point at the origin, 
 the composition of wedge products \eqref{eqn: wedge} and $\int _X$ in \eqref{eqn: theta} 
is a residue pairing as shown in \cite[Proposition 4.34]{BW: ShkConj}.
Thus  in this case,  Theorem  \ref{thm: main} is  the Polishchuk -- Vaintrob theorem \cite[Theorem 4.1.4]{PV: HRR}
and Theorem \ref{thm: exp pairing} is  the Brown -- Walker theorem \cite[Theorem 1.8]{BW: ShkConj} 
proving a conjecture of Shklyarov \cite[Conjecture 3]{Shk: Residue}. 

It is natural to consider the stacky version of Theorem \ref{thm: main}. It will be treated elsewhere \cite{CKS}.

\medskip

\noindent{\em Conventions}: Unless otherwise stated a dg category is meant to be a $\GG$-graded dg category over $k$.
For a variety $X$ over $k$, we write simply  $\Omega ^p_{X}$ for the sheaf $\Omega ^p_{X/k}$ of relative differential $p$-forms of $X$ over $k$.
For a homogenous element $a$ in a  $\GG$-graded $k$-space, $|a|$ denotes the degree of $a$. 
 For a dg category $\cA$ we often write  $\cA (x, y)$  instead of the Hom complex $\Hom _{\cA} (x, y) $ between objects $x, y$ of $\cA$. 
 We write $x \in \cA$ if $x$ is an object of $\cA$.
For a dg algebra $A$, $C(A)$ denotes the Hochschild  $\GG$-graded complex $\bigoplus _{n\ge 0} A \ot A[1]^{\ot n}$ with differential $b$.
Similarly for a dg category $\cA$, $C(\cA)$ denotes the Hochschild complex of $\cA$; see for example \cite{BW: ShkConj, Shk: HRR}.
For $a_i \in \cA (x_{i+1}, x_i) $, $i=1, ..., n$, $x_i \in \cA$, we  write  $a_0[a_1| ... | a_n] $ for $a_0 \ot sa_1 \ot ... \ot sa_n$,
where $s$ is the suspension so that $|sa| = |a| -1$. The symbol $\str$ stands for the supertrace.
By a coherent factorization for $(X, w)$ we mean a $\GG$-graded coherent $\cO_X$-sheaf $E$ with a curved differential $\delta _E$ such that
$\delta _E ^2 = w \cdot \mathrm{id}_E$.
%a coherent factorization ... 

\medskip
 
 \noindent{\em Acknowledgements}:  The author thanks David Favero, Taejung Kim, and Kuerak Chung for
 useful discussions. 

\section{Abstract Hirzebruch-Riemann-Roch}

%\subsection{Known Results}
Following mainly \cite{PV: HRR, Shk: HRR} we review the abstract Hirzebruch-Riemann-Roch theorem
in the framework of Hochschild homology theory.

\subsection{Categorical Chern characters}

For a $\GG$-graded dg category $\cA$ over $k$ let $C (\cA)$ be the Hochschild  complex of $\cA$.
For $x \in \cA$, the identity morphism $1_x$ of $x$ is a $0$-cycle element and hence it defines a class 
\[ \Ch (x) := [1_x] \in HH _0 ( \cA ) := H ^0 (  C (\cA) ) , \]
which is called the categorical Chern character of $x$. 
For an object $y$ of the dg category $ \Perf \cA$ of perfect right $\cA$-modules, we also regard $\Ch (y)$ as
an element of $HH _0 (\cA)$ by the canonical isomorphism $HH_*(\Perf \cA) \cong HH_* (\cA) $. 

\subsection{K\" unneth isomorphism}

Let $\cA$, $\cB$ be dg categories.
We define a natural chain map over $k$
\begin{align*}  C  (\cA) \ot C (\cB) & \to   C (\cA \ot \cB) \\
                              a_0[a_1| ... | a_n] \ot b_0 [b_1| ... | b_m] & \mapsto   (-1)^{|b_0| ( \sum_{i=1}^n (|a_i| -1))} a_0 \ot b_0     \sum_{\sigma} {\pm} 
                              [c_{\sigma (1)} |  ... | c_{\sigma (n+m)} ] ,
 \end{align*}
 where $\sigma$ runs for all $(n, m)$-shuffles,  $c_1 = a_1 \ot 1, ..., c_n = a_n \ot 1, c_{n+1} = 1\ot b_1, ..., c_{n+m} = 1 \ot b_{m}$, and
 the rule of sign $\pm$ is determined by the Koszul sign rule.
 Here after each shuffle, each $1$ is uniquely replaced by an appropriate identity morphism so that
 the outcomes make sense as elements of the Hochschild complex $C (\cA \ot \cB) $.
The Eilenberg-Zilber theorem says that the chain map is a quasi-isomorphism.
We call the induced isomorphism 
\[ HH_* (\cA) \ot HH_* (\cB) \to HH_* (\cA \ot \cB) \]
the K\" unneth isomorphism, denoted by $\Kun$.

\subsection{The diagonal bimodule}
 We denote by $\cA ^{op}$ the opposite category of $\cA$. For $x\in \cA$ we write
$x^{\vee}$ for the object of $\cA ^{op}$ corresponding to $x$.
Let  $\Com _{dg} k$ be the dg category of complexes over $k$.
The diagonal $\cA$-$\cA$-bimodule $\Delta _{\cA}$ of  a dg category $\cA$ is defined to
be the dg functor  \[ \Delta _{\cA} : \cA  \ot \cA ^{op}  \to \Com _{dg} k ; \ \  y \ot x ^{\vee} \mapsto \Hom_{\cA} (x, y) . \]
Since $\cA \ot \cA ^{op} \cong ( \cA ^{op}  \ot \cA )^{op}$,  $\Delta _A$ is a right $\cA ^{op} \ot \cA $-module.
Assume that  $\cA$ is proper, i.e., 
\[\sum _{i \in \GG} \dim H^i (\Hom _{\cA} (x, y))  < \infty \] for all $x, y \in \cA$.
Then we may replace the codomain of $\Delta _{\cA}$ by  the dg category $\Perf k $ of perfect dg $k$-modules.

\subsection{The canonical pairing} For a proper dg category $\cA$,
the canonical pairing  $\lan\ ,\ \ran _{can} $ is defined as the composition 
\begin{equation*} HH_*(\cA) \ti HH_* (\cA ^{op} ) \xrightarrow{\Kun}  HH_* (\cA \ot \cA ^{op}) \xrightarrow{\Delta _*}  HH_* (\Perf k) \cong k, \end{equation*} 
where $\Delta _*$ is the homomorphism in Hochschild homology level induced from the dg functor $\Delta _{\cA}$. Here we use
the canonical isomorphism $ HH_*(\Perf k)  \cong k $ making a commuting diagram for $C \in \Perf k$
\begin{align}\label{eqn: perf k tr} \xymatrix{  HH_*(\Perf k) & \ar[l]_{\cong} HH_* (k)  \cong  k , \\ 
                                                 Z\End (C) \ar[u]^{natural} \ar[ru]_{\str}  &   }  \end{align}
                                                 where $Z\End (C)$ is the graded $k$-space of closed endomorphisms of $C$ and $\str$ denotes the supertrace. 
Since $HH_*(\Perf k) = HH_0 ( \Perf k)$, the pair $\lan \gamma, \gamma ' \ran$ for $\gamma  \in HH_{p} (\cA)$, $\gamma '  \in HH_{p'} (\cA ^{op})$ 
can be nontrivial only when $p+ p' = 0$.

\subsection{A proposition}
Let $\cA$ be a proper dg category. 
Let $M$ be a perfect right $\cA ^{op} \ot \cB$ module, in other words, a perfect $\cA$-$\cB$-bimodule.
Denote by $T_M$ the dg functor \[ \ot _{\cA} M : \Perf \cA  \to \Perf \cB  \ \text{ sending } N \mapsto N \ot _{\cA}  M \]
and denote the induced map in Hochschild homology by \[ (T_M)_*: HH _* (\cA) \to HH_* (\cB) . \]

\begin{Prop} \label{prop: char can} \cite[Proposition 4.2]{Shk: HRR} 
If we write $\Ch (M) = \sum _i  t_i \ot t^i \in HH_* (\cA ^{op} ) \ot HH_* (\cB ) \cong HH_* (\cA ^{op} \ot \cB) \cong \HH _* (\Perf (\cA ^{op} \ot \cB ))$ via the K\"unneth isomorphism and the canonical isomorphism, then for every $\gamma \in HH _p  (\cA )$ we have
\[ (T_M)_* (\gamma ) = \sum _i \lan \gamma , t_i \ran _{can} \, t^i  \in HH _p (\cB). \]
\end{Prop}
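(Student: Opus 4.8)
The plan is to establish the equality of the two $k$-linear maps $HH_*(\cA)\to HH_*(\cB)$ that the statement compares, since fixing the formula for all $\gamma$ is equivalent to an equality of maps. The right-hand side is, unwinding the definition of the canonical pairing, precisely the composite
\[
HH_*(\cA)\xrightarrow{\,\gamma\mapsto\gamma\otimes\Ch(M)\,}HH_*(\cA)\otimes HH_*(\cA^{op})\otimes HH_*(\cB)\xrightarrow{\,(\Delta_*\circ\Kun)\otimes\mathrm{id}\,}HH_*(\cB),
\]
where $\Ch(M)=\sum_i t_i\otimes t^i$ under the Künneth splitting of $HH_*(\cA^{op}\otimes\cB)$ and $\Delta_*\circ\Kun=\langle\,,\,\rangle_{can}$. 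Thus it suffices to identify $(T_M)_*$ with this composite.

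First I would factor the transform through the diagonal bimodule as a composite of dg functors
\[
\Perf\cA\xrightarrow{\,E_M\,}\Perf(\cA\otimes\cA^{op}\otimes\cB)\xrightarrow{\,G\,}\Perf\cB,
\]
where $E_M$ is the external product $N\mapsto N\boxtimes M$ and $G=(-)\otimes_{\cA\otimes\cA^{op}}\Delta_\cA$ contracts the $\cA\otimes\cA^{op}$ factors against the diagonal. The natural isomorphism $N\otimes_\cA M\cong G(N\boxtimes M)$ of right $\cB$-modules is the co-Yoneda identity (contraction against the diagonal is evaluation), so $T_M\simeq G\circ E_M$ and hence $(T_M)_*=G_*\circ(E_M)_*$.

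Next I would compute the two factors on Hochschild homology. For $E_M$, I would use that the external tensor product bifunctor induces the Künneth map on $HH$; restricting its second argument to the fixed object $M$ — equivalently, precomposing with the functor $\Perf k\to\Perf(\cA^{op}\otimes\cB)$ sending the generator to $M$, which carries $1\in HH_0(\Perf k)\cong k$ to $\Ch(M)$ — gives $(E_M)_*(\gamma)=\Kun(\gamma\otimes\Ch(M))$. For $G$, since it is the diagonal contraction on the $\cA\otimes\cA^{op}$ factor and the identity on $\cB$, naturality of Künneth identifies $G_*$ with $\Delta_*\otimes\mathrm{id}_{HH_*(\cB)}$ under the triple splitting, $\Delta_*$ being the very map used to define the canonical pairing. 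Composing and invoking coherence of Künneth then yields
\[
(T_M)_*(\gamma)=\sum_i\Delta_*\!\big(\Kun(\gamma\otimes t_i)\big)\,t^i=\sum_i\langle\gamma,t_i\rangle_{can}\,t^i,
\]
as claimed.

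The conceptual skeleton above is forced by naturality, so I expect the main obstacle to be the chain-level compatibility and the attendant Koszul signs. Concretely, one must check that the Eilenberg–Zilber quasi-isomorphism really is the map induced by the external product functor, that the contraction isomorphism $N\otimes_\cA M\cong G(N\boxtimes M)$ is natural and sign-coherent, and that $G_*$ splits as $\Delta_*\otimes\mathrm{id}$ with no sign twist arising from commuting the $\cB$-chains past the $\cA\otimes\cA^{op}$-chains. A secondary point requiring care is the comparison between the functorial pushforward on $HH$ and the tensor-induced one, ensuring that $G_*$ reproduces exactly the $\Delta_*$ used in the definition of $\langle\,,\,\rangle_{can}$, together with verifying that the identification $HH_*(\Perf k)\cong k$ entering $\Delta_*$ matches the supertrace normalization of \eqref{eqn: perf k tr}.
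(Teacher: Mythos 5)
Your proposal is correct, and it is essentially the same argument the paper invokes: the paper's own proof is a one-line deferral to Shklyarov's Proposition 4.2 (with the remark that his dg-algebra proof carries over to dg categories), and that proof has exactly your structure --- factor $T_M$ as the external product $E_M$ followed by contraction $G$ against the diagonal bimodule, then use naturality of the K\"unneth map and the identification of $\Delta_*$ with the pairing. The chain-level compatibilities you flag (Eilenberg--Zilber versus $\boxtimes$, the splitting of $G_*$, the normalization $HH_*(\Perf k)\cong k$) are precisely the content of the lemmas Shklyarov establishes, so nothing beyond what you list is missing.
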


\begin{proof} The proof given in \cite{Shk: HRR} also works  for dg categories.  \end{proof}

Furthermore assume that $\cA$ is smooth, i.e, the diagonal bimodule $\Delta_{\cA}$ is perfect. Then the Hochschild homology of $\cA$ is  finite 
dimensional and hence Proposition \ref{prop: char can} can be rewritten as a commuting diagram 
\begin{equation*} \xymatrix{ HH_* (\cA ) \ar[r]^{\lan , \ran _{can}} \ar[rd]_{(T_M)_*}& HH_* (\cA ^{op}) ^* \ar[d]^{\Ch (M)} \\
             & HH_* (\cB) . }
\end{equation*}
Since $T_{\Delta _{\cA}} = \mathrm{id}_{\cA}$, the above diagram for $M = \Delta _{\cA}$ shows that $\lan, \ran _{can}$  is non-degenerate
and the canonical pairing is characterized as follows. 

Since $\Delta _{\cA} \in \Perf (\cA ^{op} \ot \cA)$,  via the K\"unneth isomorphism we can write 
 \[\Ch (\Delta _{\cA}) = \sum_i T^i \ot T_i, \text{ for some } T^i \in HH_* (\cA ^{op} )  , \ T_i \in HH_* (\cA ) . \] 
 Then %Proposition \ref{prop: char can}  in particular implies that 
$\lan , \ran _{can}$ is a unique nondegenerate $k$-bilinear map $\lan , \ran : HH_*(\cA) \ti HH_* (\cA ^{op} ) \to k $ satisfying 
\begin{equation}\label{eqn: char pairing}   
\sum _i \lan \gamma , T^i \ran \lan T_i, \gamma ' \ran = \lan \gamma, \gamma ' \ran , \text{ for every } \gamma \in HH_* (\cA ),  \gamma '\in  HH _* (\cA ^{op}) .
\end{equation} % (see \cite[Proposition 4.2]{Shk: HRR}).

\subsection{The chain map $\vee$}\label{sub: dual} Define an isomorphism of complexes 
\begin{align*} \vee : (C (\cA ), b) & \to  (C (\cA ^{op}), b)   \\
 a_0[a_1| ... | a_n] & \mapsto  (-1)^{ n + \sum_{1\le i < j \le n} (|a_i|-1)(|a_j|-1) } a_0 [a_n | ... | a_1 ] .\end{align*}

\begin{Rmk} Using the quasi-Yoneda embedding and the HKR-type isomorphism 
it is straightforward to check that the chain map $\vee$ in \S~\ref{sub: dual} fits in diagram~\eqref{diag: vv}; see for example \cite{CKK}.
\end{Rmk}

\subsection{Abstract generalized HRR}
For a proper dg category $\cA$ we may consider a sequence of natural maps 
\begin{equation}\label{eqn: diag kun}
 \xymatrix{ HH_* (\cA) \ot HH_* (\cA) \ar[r]_{\mathrm{id} \ot \vee \ \ }^{\cong \ \ \ } & HH_* (\cA) \ot HH_* (\cA ^{op}) \ar[d]_{\Kun}^{\cong}  \ar[rd]^{\lan , \ran _{can}} & \\ 
                            &       HH_* (\cA \ot \cA ^{op})   \ar[r]_{\Delta _*} &   HH_* (\Perf k  ) \cong k     . } \end{equation}

For two closed endomorphisms  $b \in \End_{\cA} (y)$, $a \in \End_{\cA}  (x)$, we
define an endomorphism $L_b \circ R_a$ of $\Hom_{\cA} (x, y)$ by sending $c \in \Hom_{\cA} (x, y)$ to $(-1)^{|a||c|} b \circ c \circ a$.
%Here  $L_b$, $R_a$ stand for the left,  right composition by $b$, $a$, respectively. 
Note that  $\Delta _* (b\ot a) = L_b \circ R_a$. Hence  from \eqref{eqn: diag kun} and  \eqref{eqn: perf k tr} we obtain
\begin{equation}\label{eqn: abs general HRR} \str (L_b \circ R_a ) =  \lan [b], [a] ^{\vee}  \ran_{can} , \end{equation}
Here $[b], [a]$ are the homology classes  in $HH_0 (\cA)$ represented by $b, a$, respectively.

\subsection{Abstract HRR}
When $a= 1_x$, $b=1_y$, \eqref{eqn: abs general HRR} yields the abstract Hirzebruch-Riemann-Roch theorem \cite{PV: HRR, Shk: HRR} for Hochschild homology:
\begin{equation}\label{eqn: abs HRR} \sum_{i\in \GG} (-1)^i \dim  H^i (\Hom_{\cA} (x, y)) ) =  \lan \Ch (y), \Ch (x)^{\vee}  \ran_{can}  . \end{equation}
This tautological HRR theorem can be useful when one expresses the right-hand side of \eqref{eqn: abs HRR} in an explicit form.

\section{Proofs of Theorems}

In this section we prove Theorems \ref{thm: main} and  \ref{thm: exp pairing}. As in \S~\ref{sec: intro}
let  $X$ be an $n$-dimensional nonsingular variety over $k$ and  $w$ is a  function on $X$ such that 
the critical locus of $w$ is in $w^{-1}(0)$ and proper over $k$.

\subsection{A geometric realization of $\Delta _{\cA}$}\label{sub: geom diag}
Let $\cA = \MFdg (X, w) $. It is proper and smooth.
There is the duality functor  \[ D: \cA^{op} \to \MFdg (X, -w) ; (E, \delta _E) \mapsto (\Hom _{\cO_X} (E , \cO _X) , \delta _E ^{\vee}) , \]
which is an isomorphism. Hence we have the HKR type isomorphism \[ HH_* (\cA ^{op} ) \cong   \HH^{-*} (\Omega ^{\bullet}_X, dw) . \]

Let $X'$ be another nonsingular variety with a global function $w'$. Assume that the critical locus of $w'$ is proper over $k$ and located on the zero locus of $w'$.
Let $\cB$ denote $\MFdg (X', w')$.
Let $\widetilde{w} := w\ot 1 - 1 \ot w' $ a global function on $Z:=X\ti X'$. %Let $\mathrm{Mod} (\cA ^{op} \ot \cB)$ denote the dg category of right $\cA ^{op} \ot \cB$-modules.
 We consider a dg functor \[ \Psi :   \MFdg (Z, -\widetilde{w}) \to \Perf (\cA ^{op} \ot \cB) \]  defined by letting
\begin{align*} \Psi (z) : \cA \ot \cB ^{op} & \to \Com _{dg} (k) ; \\
 y\ot x ^{\vee} & \mapsto   \Hom _{\MFdg (Z, -\widetilde{w})} (D(y)  \boxtimes x, z) \end{align*}
  for $y \in \cA$, $x \in \cB$, $z \in \MFdg (Z, -\widetilde{w})$. Since   $\cA ^{op} \ot \cB $ are saturated by \cite{LP},
 we may apply    Proposition 3.4 of \cite{Shk: Serre} to see that $\Psi (z)$ is indeed a perfect right $\cA ^{op} \ot \cB $-module.

Let $f: X \to X'$ be a proper morphism such that $f^* w' = w$. Then there is a dg functor 
\[ \RR f_* : \MFdg (X, w) \to \MFdg (X', w') \]  by derived pushforward; see \cite[\S~2.2]{CFGKS}.  
Define  \[ \Delta _{\RR f_*} : \cA  \ot \cB ^{op} \to \Com _{dg} k ; \ y \ot x^{\vee} \to \Hom _{\cB} (x, \RR f_* y) . \]
Again by Proposition 3.4 of \cite{Shk: Serre}, we see that  $\Delta _{\RR f_*} $ is a perfect right $\cA ^{op} \ot \cB$-module.
Let $\Gamma _f \subset X \ti X'$ denote the graph of $f$. 
Since $Z = X \ti X'$ is nonsingular, there is an object  $\cO ^{\widetilde{w}}_{\Gamma _f }$ in $\MFdg (Z, -\widetilde{w})$ which is quasi-isomorphic to
the coherent  factorization $\cO _{\Gamma _f}$ for $(Z, \widetilde{w})$.
Since \[  \Delta _{\RR f_*} ( y \ot x ^{\vee} ) =  \cB (x, \RR f_* y ) \underset{qiso}{\simeq} \Hom_{\MFdg (Z, -\widetilde{w})} (D(y) \boxtimes x,  \cO _{\Gamma _f} ^{\tilde{w}} ) , \]
 by the projection formula \cite[\S~2.2]{CFGKS}, 
 $\Delta _{\RR f_*} $ and $\Psi (\cO ^{\widetilde{w}}_{\Gamma _f }) $ are isomorphic in the derived category of right $\cA ^{op} \ot \cB$-modules.
Hence $\Ch ( \Delta _{\RR f_*} ) = \Ch (\Psi (\cO ^{\widetilde{w}}_{\Gamma _f })) $.

Consider a dg functor 
\[ \boxtimes:  \cA ^{op} \ot \cB \to  \MFdg (Z, -\widetilde{w}) ; u^{\vee} \ot v \mapsto D(u) \boxtimes v . \] 
The following commutative diagram of natural isomorphisms transforms the abstract terms to the concrete terms:
{\tiny \begin{equation}\label{diag: big comm}  \xymatrix{      
  HH_* (\cA ^{op} ) \ot HH_* (\cB)  \ar[dd]_{I_{HKR}}^{\cong}   \ar[r]_{\Kun}^{\cong}   &   HH_* ( \cA ^{op}  \ot \cB ) \ar[d]_{\boxtimes} \ar[rd]^{Yoneda}_{\cong} \\
                         & HH_* (   \MFdg (Z, -\widetilde{w})) \ar[d]_{I_{HKR}}^{\cong} \ar[d] \ar[r]_{\Psi} & HH_* (\Perf (\cA ^{op} \ot \cB)) \\
  \HH^{-*} (\Omega ^{\bullet}_X, dw) \ot \HH ^{-*} (\Omega ^{\bullet}_{X'}, -dw) 
  \ar[r]^(.6){\cong}_(.6){\text{\em K\"unneth}}  & \HH^{-*} (\Omega _{Z}^{\bullet}, d\widetilde{w}) .  &  } \end{equation}}
The commutativity of the triangle is straightforward. The commutativity of the rectangle can be seen as follows. Using the Mayer-Vietoris sequence argument,
we reduce it to the case when $X$ and $X'$ are affine. We further reduce it to the curved smooth algebra case. In the curved smooth algebra case, the commutativity 
of a corresponding diagram for Hochschild complexes of the second kind is straightforward; see for example \cite{CKK}.

We conclude  that  
\begin{equation}\label{eqn: ch Rf}  \ch ( \Delta _{\RR f_*}  )  = \ch (\cO ^{\widetilde{w}}_{\Gamma _f}) \in  \HH ^{0} (\Omega ^{\bullet}_{Z} , d\widetilde{w})  \end{equation}
by the compatibility of the K\"unneth isomorphisms and the HKR type isomorphisms in \eqref{diag: big comm}.
 In particular for $f=\mathrm{id}_X$ we have
 \begin{equation}\label{eqn: ch diag}  \ch ( \Delta _{\cA}  )  = \ch (\cO ^{\widetilde{w}}_{\Delta _X}) \in  \HH ^{0} (\Omega ^{\bullet}_{X^2} , d\widetilde{w})  \end{equation}
 if the subscript $\Delta _X$ denote $\Gamma _{\mathrm{id}_X}$.

\subsection{Some definitions}

\begin{Def}\label{def: Td}   Considering a vector bundle $F$ as an object in the derived category of coherent sheaves on $X$, we have 
the categorical Chern character of $F$ and hence $\ch (F) \in \oplus_p H ^0 (X, \Omega ^p_X [p]) \underset{I_{HKR}}{\cong} HH _0 (D^b (\mathrm{coh} (X)))$. Using this 
and the Todd class formula in terms of Chern roots we define 
$\td (F) \in \bigoplus _{p} H^0 (X, \Omega ^p_X[p])$, which we  call the {\em Todd class} of $F$ valued in Hodge cohomology. 
We write $\td (X)$ for $\td (T_X)$, called the Todd class of $X$. Similarly, we define the $i$-th Chern class $c_i (F)$ of $F$ valued in Hodge cohomology.
\end{Def}

\begin{Def}\label{def: Wedge}  Let $w_i \in \Gamma (X, \cO _X)$, $i=1, 2$ and let $Z_i$ be the critical locus of $w_i$. 
The {\em wedge product} $\wedge$  of twisted Hodge cohomology classes is defined by the composition of 
%\begin{multline*} %\label{eqn: wedge in tw}  
% \bigoplus _{p+q = m}  \HH ^{p} (X, (\Omega ^{\bullet} _X, dw_1)) \ot \HH ^{q} (X, (\Omega ^{\bullet} _X, dw_2)) \\
%         \xrightarrow{\text{\em K\"unneth}}         \HH ^{m} (X^2 , (\Omega ^{\bullet} _X, dw_1 ) \boxtimes (\Omega ^{\bullet} _X, dw_2 ))  \\
%                      \xrightarrow{\Delta ^*_X}         \HH ^{m} (X, (\Omega ^{\bullet} _X, dw_1 ) \otimes _{\cO_X}  (\Omega ^{\bullet} _X, dw_2 ))  \\
%                            \xrightarrow{ wedge }    \HH ^{m}_{Z_1\cap Z_2} (X, (\Omega ^{\bullet} _X, dw_1+ dw_2 ))  . \end{multline*}
\begin{align*} %\label{eqn: wedge in tw}  
&   \bigoplus _{q_1+q_2 = q \in \GG}   \HH ^{q_1} (X, (\Omega ^{\bullet} _X, dw_1)) \ot \HH ^{q_2} (X, (\Omega ^{\bullet} _X, dw_2)) \\
        & \qquad \qquad \xrightarrow{\text{\em K\"unneth}}      \quad   \HH ^{q} (X^2 , (\Omega ^{\bullet} _X, dw_1 ) \boxtimes (\Omega ^{\bullet} _X, dw_2 ))  \\
                    & \qquad \qquad \qquad   \qquad  \xrightarrow{\Delta ^*_X}      \quad     \HH ^{q} (X, (\Omega ^{\bullet} _X, dw_1 ) \otimes _{\cO_X}  (\Omega ^{\bullet} _X, dw_2 ))  \\
                         &   \qquad \qquad \qquad \qquad \qquad \xrightarrow{ wedge }   \quad   \HH ^{q}_{Z_1\cap Z_2} (X, (\Omega ^{\bullet} _X, dw_1+ dw_2 ))  . \end{align*}
Here $\Delta ^*_X$ is the pullback of the diagonal morphism $X \to X\ti X$. We sometimes omit the symbol $\wedge$ for the sake of simplicity. 
\end{Def}

\begin{Def}\label{def: ( )}  Let $Z$ denote the critical locus of $w$.
Consider a sequence of maps 
%\[ \begin{array}{lll}  \HH ^{*} (X, (\Omega ^{\bullet}_X, -dw))  \ti \HH ^{*} (X, (\Omega ^{\bullet}_X, dw))
%& \xrightarrow{\cdot \wedge \cdot } &  \HH ^{*} _Z(X, (\Omega ^{\bullet}_X, 0))  \\
%& \xrightarrow{\cdot \wedge \td (X) } &   \HH ^{*} _Z(X, (\Omega ^{\bullet}_X, 0))  \\
%& \xrightarrow{proj} & \HH ^0_Z (X, \Omega ^n _X [n] )  \\ 
%& \xrightarrow{natural}  & \HH ^0_c (X, \Omega ^n _X [n] ) \\
%& \xrightarrow{(-1)^{{n+1}\choose{2}} \int _X}  &  k . \end{array} \]
 \begin{multline}   \HH ^{*} (X, (\Omega ^{\bullet}_X, -dw))  \ti \HH ^{*} (X, (\Omega ^{\bullet}_X, dw))
 \xrightarrow{\cdot \wedge \cdot }   \HH ^{*} _Z(X, (\Omega ^{\bullet}_X, 0))  \\
 \xrightarrow{\wedge \td (X) }    \HH ^{*} _Z(X, (\Omega ^{\bullet}_X, 0))  
 \xrightarrow{proj}  \HH ^0_Z (X, \Omega ^n _X [n] )  
 \xrightarrow{}   \HH ^0_c (X, \Omega ^n _X [n] ) 
 \xrightarrow{(-1)^{{n+1}\choose{2}} \int _X}    k . \end{multline} 
Denote the composition  $ (-1)^{ {n+1}\choose{2} }  \int _X( \cdot \wedge \cdot \wedge \td (X))$ by $\lan , \ran$.
\end{Def}

\subsection{Proof of Theorem \ref{thm: exp pairing} }\label{pf exp pairing}
Since $\td (X)$ is invertible, the nondegeneracy of  $\lan , \ran $ follows from Serre's duality; see \cite[\S~4.1]{FK: GLSM}.
 Therefore it is enough to show that $\lan , \ran $ satisfies  \eqref{eqn: char pairing} under the HKR-type isomorphism in  \eqref{diag: big comm}.
Recalling \eqref{eqn: ch diag},
we write \[\ch (\cO ^{\widetilde{w}}_{\Delta _X}) = \sum_i t^i \ot t_i  \in  \bigoplus _{q \in \GG } \HH^{q} (X, (\Omega ^{\bullet}_X, dw)) \ot \HH ^{-q} (X, (\Omega ^{\bullet}_X, -dw )) .\]
For $\gamma \in \HH ^{*} (\Omega ^{\bullet}_{X}, -dw)$ and $\gamma ' \in \HH ^{*} (\Omega ^{\bullet}_{X}, dw) $, we have 
\begin{align}
  \sum _i \lan \gamma , t^i \ran \lan t_i, \gamma ' \ran 
& =    \int _{X\ti X}  ( \gamma \ot \gamma ') \wedge \ch (\cO ^{\widetilde{w}}_{\Delta _X}) \wedge (\td (X) \ot \td (X) ) ,  \label{eqn: ch Delta}
\end{align}
since $\int _X \ot _k \int _X = \int _{X \ti X} \circ \text{\it K\" unneth} $.

Since $\cO ^{\widetilde{w}}_{\Delta _X}$ is supported on the diagonal $\Delta _X \subset X \ti X$, we will apply the deformation of $X\ti X$ to 
the normal cone of $\Delta _X$. The normal cone is isomorphic to the tangent bundle $T_X$ of $X$. Let $\pi$ denote the projection $T_X \to X$.
%Note that \[\PP (T_X \oplus \cO _X) \supset \tot (T_X) = \PP (T_X \oplus \cO_X) - \PP (T_X) \supset \PP (\cO_X) = X \cong \Delta _X .\]
We claim a sequence of equalities
\begin{align*}
 \text{ RHS of }\eqref{eqn: ch Delta} 
& \overset{(\dagger )}{=}     \int _{T_X}  \pi^* (\gamma \wedge \gamma ') \wedge \ch (\mathrm{Kos} (s)) \wedge \pi^* \td (X) ^2  \\
%& \overset{(1)}{=}     \int _{\PP (T_X \oplus \cO_X )}  \pi^*( \gamma \wedge \ \gamma ' \wedge \td (X)) \wedge \ch (\mathrm{Kos} (s)) \wedge \td (Q)   \\
&  \overset{(\dagger\dagger)}{=}         (-1)^{\binom{n+1}{2}}  \int _{X} (\gamma \wedge \gamma ' \wedge \td (X)) 
 =   \lan \gamma, \gamma ' \ran ,
\end{align*} whose proof will be given below.
Here  $s$ is the `diagonal'  section of $\pi ^*T_X$ defined by $s(v) = (v, v) \in \pi^* T_X $ for $v\in T_X$ 
and $\mathrm{Kos} (s)$ is the Koszul complex $(\bigwedge ^{\bullet}  \pi^* T_X ^{\vee} , \iota _s )$ associated to $s$.

For $(\dagger)$ consider the deformation space $M^{\circ}$ of $X\ti X$ to the normal cone of the diagonal $\Delta_{X}$; see \cite{Fulton}.
%The space $M$ is constructed as the blow-up of $X\ti X \ti \PP ^1$ along the closed subscheme $\Delta _{X} \ti \infty$. 
It is a variety  with morphisms $h: M^{\circ} \to X\ti X$ and $pr: M^{\circ} \to \PP^1$, satisfying that (i)
the preimages of general points of $\PP^1$ are $X \ti X$, (ii) the preimage of a special point $\infty$ of $\PP ^1$ is 
the normal cone $N_{\Delta _X / X^2} = T_X$,
(iii) $pr$ is a flat morphism, and (iv) $h|_{T_{X}} $ coincides with the composition $\Delta \circ \pi$.

The morphism $\Delta \ti \mathrm{id}_{\mathbb{A}^1} : X  \ti \mathbb{A}^1  \to X \ti X \ti \mathbb{A}^1$ extends to a closed immersion $f: X \ti \PP ^1 \to M^{\circ}$.
For a closed point $p$ of $\PP^1$ let $M^{\circ}_p$ denote the fiber $pr ^{-1} (p)$ and consider the commuting diagram  
\[ \xymatrix{   X \ar[r]  \ar[d]_{ \Delta = f_0} &  X\ti \PP ^1 \ar[d]^{f} & \ar[l]  X \ar[d]^{f_{\infty} = \text{ zero section}}   \\
                                                          X^2 =     M^{\circ}_0  \ar[r]^{g_0} \ar[d] & M^{\circ}  \ar[rd]^{h} \ar[d]^{pr} &   \ar[l]_{g_{\infty}}   M^{\circ}_{\infty} = T_X  \ar[d]^{ \Delta \circ \pi  }   \\
                                                                0      \ar[r] &     \  \PP ^1 &  X^2    } \]
                                                             with three fiber squares.
 Since $X\ti \PP^1$ and $M^{\circ}_p$ are Tor independent over $M^{\circ}$,  we have
 \begin{equation}\label{eqn: Tor} \mathbb{L} g_p^* f_*  \cO _{X\ti \PP ^1} \underset{qiso}{\sim} (f_{p})_{ *} \cO _{X} , \end{equation}
 i.e., they are quasi-isomorphic as coherent factorizations for $(M_p, -h^* \widetilde{w} |_{M_p})$.                                      
Note that $h^*\widetilde{w}|_{\PP (T_{X} \oplus \cO_{X} )} = 0$. 
Since $s$ is a regular section with the zero locus  $X \subset T_X$,
two factorizations $(f_{\infty})_{ *}\cO _{X} $ and     $ \mathrm{Kos} (s)  $
 are quasi-isomorphic to each other  
as coherent factorizations for $( T _{X} , 0)$: 
\begin{equation}\label{eqn: Kos bar} (f_{\infty})_{ *}\cO _{X}      \underset{qiso}{\sim} \mathrm{Kos} (s) .   \end{equation} 
 For  $\rho =  ( \gamma \ot \gamma ') \wedge (\td (X) \ot \td (X)) $, we have a sequence of equalities
\begin{align*} & \int _{X \ti X} \rho   \wedge \ch (  (f_{0})_* \cO _{X}) & \\
 = &       \int _{X \ti X} \rho  \wedge \ch (  \mathbb{L}g_0^* f_* \cO _{X \ti \PP ^1} ))     &  \text{ by \eqref{eqn: Tor}}    \\
= & \int _{X \ti X}  g_0^*(h^* \rho  \wedge  \ch (  f_* \cO _{X \ti \PP ^1} )) & \text{ by the functoriality of $\ch$} \\
   = & \int _{T_{X}  }   g_{\infty}^*(h^* \rho  \wedge  \ch (  f_* \cO _{X \ti \PP ^1} )) &
    \text{ by Lemma \ref{lem: fiber base}} \\
 = &  \int _{T_{X} }   \pi ^* \Delta^* \rho  \wedge \ch (  \mathrm{Kos} (s)) &  \text{ by  \eqref{eqn: Tor} \& \eqref{eqn: Kos bar}} ,
 \end{align*}
 which shows  $(\dagger)$.

The equality $(\dagger\dagger)$ immediately follows from some basic properties of the proper pushforward \eqref{def: gen push} 
in Hodge cohomology: the functoriality \eqref{eqn: fun}, the projection formula \eqref{eqn: proj},
and \eqref{eqn: rel comp}.

%% \begin{equation}\label{eqn: pi com} 
%%\int _{\pi}   \ch (\mathrm{Kos} (s)) \wedge \td (\pi^* T_X)  = \int _{\pi}  c_n (Q) =  (-1)^{\binom{n+1}{2}}. \end{equation} Equality \eqref{eqn: pi com} holds by the deformation $s\to 0$, 
%%the base change \eqref{eqn: fiber base}, and  the computation \eqref{eqn: comp}.

\subsection{Proof of Theorem \ref{thm: main}}
For $\ka \in  \oplus _{i \in \GG} \RR^i \End (P)$ and $\kb \in \oplus _{i \in \GG} \RR^i\End (Q)$, let us define 
\[L_{\kb} \circ R_{\ka} :  \oplus _{i \in \GG}\RR ^i \Hom (P, Q) \to  \oplus _{i \in \GG} \RR ^i \Hom (P, Q) , \ c   \mapsto (-1)^{|\ka||c|} \kb\circ c\circ \ka .\]
Since $\ka$ and $\kb$ are cycle classes of $C (\cA)$, they can be considered as elements of $HH_* (\cA)$. 
We denote by $\tau (\ka)$, $\tau (\kb)$ be the image of $\ka$, $\kb$ under the HKR map. The map $\tau$ is sometimes called the boundary-bulk map. 
Combining \eqref{eqn: abs general HRR} and Theorem \ref{thm: exp pairing} we obtain this.
\begin{Cor}\label{cor: gen} (The Cardy Condition) We have
\begin{equation}\label{eqn: general HRR}  \str (L_{\kb} \circ R_{\ka}  )     = (-1)^{ \binom{n+1}{2} }  \int _X  \tau (\kb ) \wedge \tau (\ka) ^{\vee} \wedge \td (X)   .\end{equation} 
In particular, Theorem \ref{thm: main} holds.
\end{Cor}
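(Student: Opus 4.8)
The plan is to read off both assertions from the abstract identity \eqref{eqn: abs general HRR} once Theorem~\ref{thm: exp pairing} is in hand. First I would apply \eqref{eqn: abs general HRR} with $a = \ka$ and $b = \kb$, obtaining
\[ \str(L_{\kb} \circ R_{\ka}) = \lan [\kb], [\ka]^{\vee} \ran_{can}, \]
where $[\ka], [\kb] \in HH_0(\cA)$ are the classes represented by the closed endomorphisms $\ka, \kb$. By definition the boundary-bulk map is $\tau = I_{HKR}$ on such classes, so $\tau(\kb) = I_{HKR}([\kb])$ and $\tau(\ka) = I_{HKR}([\ka])$. To interpret the dualized factor I would invoke the commutativity of diagram~\eqref{diag: vv}: it states exactly that the categorical operation $\vee$ on $HH_*(\cA)$ intertwines, under $I_{HKR}$, with the chain-level $\vee$ on twisted Hodge cohomology. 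Hence $I_{HKR}([\ka]^{\vee}) = \tau(\ka)^{\vee}$.

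Next I would transport the canonical pairing through $I_{HKR}$ by means of Theorem~\ref{thm: exp pairing}, which identifies $\lan\,,\,\ran_{can}$ with $(-1)^{\binom{n+1}{2}} \int_X(\cdot \wedge \cdot \wedge \td(X))$. Combining the two steps gives
\[ \str(L_{\kb} \circ R_{\ka}) = (-1)^{\binom{n+1}{2}} \int_X \tau(\kb) \wedge \tau(\ka)^{\vee} \wedge \td(X), \]
which is precisely \eqref{eqn: general HRR}. The factors lie in the groups demanded by the wedge product \eqref{eqn: wedge}: as $\kb \in \End(Q)$ with $Q \in \MFdg(X, w)$ we have $\tau(\kb) \in \HH^{*}(X, (\Omega^{\bullet}_X, -dw))$, whereas applying $\vee$ sends $\tau(\ka)$ into $\HH^{*}(X, (\Omega^{\bullet}_X, dw))$, so the ordering $(-dw), (dw), (0)$ is respected.

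To derive Theorem~\ref{thm: main} I would specialize to $\ka = 1_P$, $\kb = 1_Q$. Since $|1_P| = 0$, the operator $L_{1_Q} \circ R_{1_P}$ is the identity on $\oplus_{i \in \GG} \RR^i \Hom(P, Q)$, and the supertrace of the identity on a $\GG$-graded space is $\sum_{i \in \GG} (-1)^i \dim \RR^i \Hom(P, Q)$. Moreover $\tau(1_P) = \ch(P)$ and $\tau(1_Q) = \ch(Q)$ by the very definition of the Chern character as $I_{HKR}([1_E])$. Thus \eqref{eqn: general HRR} becomes
\[ \sum_{i \in \GG} (-1)^i \dim \RR^i \Hom(P, Q) = (-1)^{\binom{n+1}{2}} \int_X \ch(Q) \wedge \ch(P)^{\vee} \wedge \td(X). \]
It remains only to match the order of the two Chern characters with \eqref{eqn: main}: both $\ch(Q)$ and $\ch(P)^{\vee}$ are concentrated in degree $0$, so the graded-commutativity of the wedge product of forms yields $\ch(Q) \wedge \ch(P)^{\vee} = \ch(P)^{\vee} \wedge \ch(Q)$ under the integral, giving \eqref{eqn: main}.

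Since the two essential ingredients are already available, I do not expect a genuine technical obstacle in this final assembly; the argument amounts to unwinding definitions. The one place that rewards care is the tracking of the $\vee$ operation—one must be sure that dualizing at the categorical level and then applying $I_{HKR}$ lands in the $(dw)$-twisted complex with the correct sign, rather than in some other twist—but this is furnished once and for all by the commutativity of \eqref{diag: vv}. The substance of the result lies in Theorem~\ref{thm: exp pairing} and the deformation-to-the-normal-cone computation behind it, not in the present corollary.
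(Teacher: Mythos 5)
Your proposal is correct and takes essentially the same route as the paper, whose entire proof is the single sentence ``Combining \eqref{eqn: abs general HRR} and Theorem \ref{thm: exp pairing} we obtain this'' --- precisely your assembly of the abstract Cardy identity, the explicit pairing of Theorem \ref{thm: exp pairing}, the compatibility \eqref{diag: vv} for the $\vee$ operation, and the specialization $\ka = 1_P$, $\kb = 1_Q$. Your extra bookkeeping (which twist each factor lives in, and the degree-zero commutation needed to match the order of $\ch(P)^{\vee}$ and $\ch(Q)$ in \eqref{eqn: main}) is left implicit in the paper but introduces no divergence in method.
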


Corollary \eqref{eqn: general HRR} is the matrix factorization version
of Theorem 16 of \cite{CW} and the explicit Cardy condition in  \cite{Ram: HRR}.

Let $\fU = \{ U_i \}_{i\in I} $ be an affine open covering of $X$ and let $\nabla _i$ be a connection of $P|_{U_i}$, which always exists. 
By \cite{CKK,  KP, Platt} the following formula for $\tau (\ka)$ in the \v{C}ech cohomology $\check{\HH}^0 (\fU, (\Omega ^{\bullet}_X, dw )$
is known:
\[ \tau (\ka) =  \str \left( \big( \exp (-([\nabla _i, \delta _E])_i - (\nabla _i - \nabla _j)_{i < j} ) \big)\check{\ka}  \right)  , \]
where $\check{\ka}$ is a \v{C}ech representative of $\ka$. 
Here we recall that  $\Omega ^{\bullet}_X = \oplus _{p=0}^n \Omega _X ^p [p] $ is $\GG$-graded.

In the local case, i.e., $X$ is an open neighborhood of the origin $0$ in $\mathbb{A} ^n_k$
and $w$ has a critical point only at $0$ with $w(0)=0$, we can relate the canonical pairing  with a residue pairing. 
Let $x=(x_1, ..., x_n)$ be a local coordinate system 
and let $\partial _i w = \frac{\partial w}{\partial x_i}$.
Proposition 4.34 of \cite{BW: ShkConj} shows that 
\[ \int _X ( \tau (\kb) \wedge \tau (\ka) ^{\vee}   ) = \underset{x=0}{\mathrm{Res}} \left[ \frac{g(x) f(x) }{\partial _1 w, ..., \partial _n w} \right] \] 
for $\tau (\ka) = f(x) dx_1 ... dx_n$, $\tau (\kb) = g(x) dx_1 ... x_n$ in $\Omega ^n _{X} / dw \wedge \Omega ^{n-1}_X$.
Hence from Theorem \ref{thm: exp pairing} and $\tau (\ka ^{\vee}) = \tau (\ka )^{\vee}$ we immediately obtain this.

\begin{Cor} \cite{BW: ShkConj, PV: HRR}
In the local case we have \[ \lan \kb , \ka ^{\vee} \ran _{can} = (-1)^{ \binom{n+1}{2}}   \underset{x=0}{\mathrm{Res}} \left[ \frac{g(x) f(x)}{\partial _1 w, ..., \partial _n w} \right]  . \]
\end{Cor}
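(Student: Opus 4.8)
The plan is to obtain the formula as a direct specialization of the machinery already assembled, with essentially no new argument required: I would combine Theorem~\ref{thm: exp pairing}, the vanishing of the Todd contribution in the local situation, and the residue computation of Proposition~4.34 of \cite{BW: ShkConj}. Since all the categorical and analytic content has been packaged into those statements, I expect the work to amount to propagating the identifications and signs correctly rather than proving anything substantial.

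First I would apply Theorem~\ref{thm: exp pairing} to express the canonical pairing of $\kb \in HH_*(\cA)$ and $\ka^\vee \in HH_*(\cA^{op})$ in geometric terms,
\[ \lan \kb, \ka^\vee \ran_{can} = (-1)^{\binom{n+1}{2}} \int_X \big( \tau(\kb) \wedge \tau(\ka^\vee) \wedge \td(X) \big), \]
where the identification of $\kb,\ka^\vee$ with their twisted Hodge cohomology images is provided by $I_{HKR}$ together with diagram~\eqref{diag: vv}. Next I would simplify the two features that specialize this to the local case. Since $X$ is open in $\mathbb{A}^n_k$ its tangent bundle is trivial, so every positive-degree Chern class of $T_X$ vanishes and hence $\td(X) = 1$ by Definition~\ref{def: Td}; the Todd factor thus disappears. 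I would then invoke the compatibility $\tau(\ka^\vee) = \tau(\ka)^\vee$ recorded just above, which is precisely the commutativity of \eqref{diag: vv}, to rewrite the second slot and arrive at $(-1)^{\binom{n+1}{2}} \int_X \big( \tau(\kb) \wedge \tau(\ka)^\vee \big)$.

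Finally I would substitute the residue identity of Proposition~4.34 of \cite{BW: ShkConj}, which evaluates the wedge-and-integrate pairing of Definition~\ref{def: ( )} in the local case as the Grothendieck residue $\int_X(\tau(\kb) \wedge \tau(\ka)^\vee) = \underset{x=0}{\mathrm{Res}}\left[\frac{gf}{\partial_1 w, \ldots, \partial_n w}\right]$ for the top-degree representatives $\tau(\ka) = f\, dx_1 \cdots dx_n$ and $\tau(\kb) = g\, dx_1 \cdots dx_n$ in $\Omega^n_X / dw \wedge \Omega^{n-1}_X$. Threading this through the previous display yields the claimed formula. I do not anticipate any genuine obstacle; the only points needing care are the consistent carrying of the global sign $(-1)^{\binom{n+1}{2}}$ and the verification that, under the isolated-singularity hypothesis, the projection onto $\HH^0_Z(X, \Omega^n_X[n])$ in Definition~\ref{def: ( )} really does reduce the wedge pairing to the residue pairing on $\Omega^n_X / dw \wedge \Omega^{n-1}_X$---and both of these are already supplied by the cited proposition.
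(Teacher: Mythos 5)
Your proposal is correct and follows essentially the same route as the paper: the paper likewise obtains the corollary by combining Theorem~\ref{thm: exp pairing}, the compatibility $\tau(\ka^\vee)=\tau(\ka)^\vee$ from diagram~\eqref{diag: vv}, and the residue identity of Proposition~4.34 of \cite{BW: ShkConj}. Your only addition is to spell out that $\td(X)=1$ because $T_X$ is trivial for $X$ open in $\mathbb{A}^n_k$, a point the paper uses implicitly in passing from the pairing with the Todd factor to the residue formula.
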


The corollary above reproves a conjecture of Shklyarov \cite[Conjecture 3]{Shk: Residue}.

\subsection{GRR type theorem}\label{sub: GRR}
Consider the proper morphism $f: X \to X'$ in \S~\ref{sub: geom diag},  inducing the dg functor $\RR f_*$ and  the module $\Delta _{\RR f_*} \in \Perf (\cA ^{op} \ot \cB)$.
They together  make a commutative diagram
\begin{equation*} \xymatrix{  \ar[d]_{Yoneda} \cA := \MFdg (X, w) \ar[r]^{  \RR f_* }  &  \cB := \MFdg (X' , w') \ar[d]^{Yoneda} \\
                                                        \Perf \cA    \ar[r]_{T_{\Delta _{\RR f_*}}} & \Perf \cB  . } \end{equation*}

The paring defined by the composition 
\[ \HH ^{*} (X', (\Omega ^{\bullet}_{X'}, -dw')) \ot  \HH ^{*} (X', (\Omega ^{\bullet}_{X'}, dw')) \xrightarrow{\wedge} \HH ^{*} _c (X', (\Omega ^{\bullet}_{X'}, 0)) \xrightarrow{\int _{X'}} k \]
is nondegenerate by the Serre duality; see \cite[\S~4.1]{FK: GLSM}.
Using the paring we define the pushforward for $q\in \GG$
\[ \int _f :   \HH ^{q} (X, (\Omega ^{\bullet}_{X}, -dw)) \to  \HH ^{q} (X', (\Omega ^{\bullet}_{X'}, -dw')) \]
by the projection formula requirement
\[ \int _{X'} ( \int _f \ka ) \wedge \beta  = \int _{X} \ka \wedge f^* \beta  \]
for every  $\beta \in   \HH ^{-q} (X', (\Omega ^{\bullet}_{X'}, dw'))$.

Let $n= \dim X$ and $m=  \dim X'$. Denote by $HH (\RR f_* )$ the map in Hochschild homology level from $\RR f_*$.
Let $K_0 (\cA)$, $K_0 (\cB)$ be the Grothendieck group of the homotopy category of $\cA$, $\cB$, respectively.
\begin{Thm} 
The diagram 
\begin{equation*} \xymatrix{ K_0 (\cA ) \ar[d]_{\Ch} \ar[rr]^{\RR f_*} & & K_0 (\cB) \ar[d]^{\Ch} \\ 
\ar[d]_{I_{HKR}}  HH_* (\cA) \ar[rr]^{HH(\RR f_*)}  & & HH_* (\cB )  \ar[d]^{I_{HKR}}  \\
                                            \HH ^{-*} (X, (\Omega ^{\bullet}_{X}, -dw)) \ar[rr]_{ (-1)^{\sharp } \int _{f} \cdot \wedge \td (T_f) } & &  \HH ^{-*} (X', (\Omega ^{\bullet}_{X'}, -dw')) } \end{equation*}
is commutative.  Here $\td (T_f) : = \td (X) / f^* \td (X')$ and $\sharp = \binom{n+1}{2} -  \binom{m+1}{2} $.
\end{Thm}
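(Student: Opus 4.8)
The plan is to deduce the full rectangle from the two squares separately. The top square commutes by naturality of the categorical Chern character: since $\RR f_*$ corresponds under Yoneda to the bimodule functor $T_{\Delta_{\RR f_*}}$, and $\Ch(x) = [1_x]$ is carried by any dg functor $F$ to $[1_{F(x)}] = \Ch(F(x))$, one has $\Ch(\RR f_* P) = HH(\RR f_*)(\Ch(P))$. Hence it suffices to establish the bottom square, i.e. for every $\gamma \in HH_*(\cA)$ with $u := I_{HKR}(\gamma) \in \HH^{-*}(X,(\Om_X,-dw))$ the identity
\begin{equation*} I_{HKR}(HH(\RR f_*)(\gamma)) = (-1)^{\sharp}\int_f\big(u\wedge\td(T_f)\big). \end{equation*}

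Next I would unwind the left-hand side using Shklyarov's characterization. Since $HH(\RR f_*) = (T_{\Delta_{\RR f_*}})_*$, Proposition~\ref{prop: char can} applied to $M = \Delta_{\RR f_*}$ gives $HH(\RR f_*)(\gamma) = \sum_i \lan \gamma, t_i\ran_{can}\, t^i$, where $\Ch(\Delta_{\RR f_*}) = \sum_i t_i \ot t^i \in HH_*(\cA^{op})\ot HH_*(\cB)$. Passing to Hodge cohomology, \eqref{eqn: ch Rf} identifies $\sum_i \tau(t_i)\boxtimes\tau(t^i)$ with $\ch(\cO^{\widetilde{w}}_{\Gamma_f})$ on $Z := X\times X'$, while Theorem~\ref{thm: exp pairing} evaluates $\lan\gamma,t_i\ran_{can} = (-1)^{\binom{n+1}{2}}\int_X u\wedge\tau(t_i)\wedge\td(X)$. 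Because the pairing $\int_{X'}(\cdot\wedge\cdot)$ on $X'$ is nondegenerate (Serre duality, as in the definition of $\int_f$), it is enough to test the desired identity against an arbitrary $\beta\in\HH^{*}(X',(\Om_{X'},dw'))$. Using $\int_X\ot_k\int_{X'} = \int_Z\circ\,\text{K\"unneth}$ (as in the derivation of \eqref{eqn: ch Delta}) and the defining property of $\int_f$, the whole statement collapses to the single scalar equation
\begin{equation*} (-1)^{\binom{n+1}{2}}\int_Z \pr_X^*(u\wedge\td(X))\wedge\ch(\cO^{\widetilde{w}}_{\Gamma_f})\wedge\pr_{X'}^*\beta = (-1)^{\sharp}\int_X (u\wedge\td(T_f))\wedge f^*\beta. \end{equation*}

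The remaining, decisive step is to evaluate the integral on the left by deformation to the normal cone of the graph $\Gamma_f\subset Z$, running the argument of $(\dagger)$--$(\dagger\dagger)$ with the diagonal $\Delta_X$ replaced by $\Gamma_f$, whose normal bundle is $N_{\Gamma_f/Z}\cong f^*T_{X'}$, of rank $m$. The central-fiber map factors as $\iota_f\circ\pi'$ (where $\iota_f:X\to Z$ is the graph morphism) and hence lands in $\Gamma_f$, on which $\widetilde{w} = w - f^*w' = 0$; therefore $h^*\widetilde{w}$ restricts to $0$ on the central fiber, and the computation reduces, just as in $(\dagger)$, to a fiber integral over $f^*T_{X'}$ against $\ch(\mathrm{Kos}(s'))$ for the tautological section $s'$ of $\pi'^*(f^*T_{X'})$. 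Since $\iota_f^*\pr_X^* = \mathrm{id}$ and $\iota_f^*\pr_{X'}^* = f^*$, the $(\dagger\dagger)$-step for a rank-$m$ Koszul complex yields
\begin{equation*} \int_Z\pr_X^*(u\wedge\td(X))\wedge\ch(\cO^{\widetilde{w}}_{\Gamma_f})\wedge\pr_{X'}^*\beta = (-1)^{\binom{m+1}{2}}\int_X u\wedge\td(T_f)\wedge f^*\beta, \end{equation*}
using $\td(X)/f^*\td(X') = \td(T_f)$. Multiplying by $(-1)^{\binom{n+1}{2}}$ and invoking the defining projection-formula property of $\int_f$ proves the identity, since $\binom{n+1}{2}+\binom{m+1}{2}\equiv\sharp\pmod 2$.

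The main obstacle is this last deformation-to-the-normal-cone computation for the graph, and in particular the sign bookkeeping: one must verify that the twisted, $\GG$-graded Chern character of the rank-$m$ Koszul complex contributes exactly the factor $f^*\td(X')^{-1}$ and the sign $(-1)^{\binom{m+1}{2}}$, and that the Koszul signs introduced by the K\"unneth map, by the duality $\vee$, and by reordering wedge factors assemble correctly. All of these are controlled by the instance $f=\mathrm{id}_X$ already treated in the proof of Theorem~\ref{thm: exp pairing}, of which the present argument is the verbatim generalization: the Tor-independence and base-change inputs \eqref{eqn: Tor}, \eqref{eqn: Kos bar} and the properties \eqref{eqn: fun}, \eqref{eqn: proj}, \eqref{eqn: rel comp} of the proper pushforward carry over unchanged once $\Delta_X$ and $T_X$ are replaced by $\Gamma_f$ and $f^*T_{X'}$.
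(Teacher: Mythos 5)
Your proposal is correct and follows essentially the same route as the paper: the top square by naturality of $\Ch$, and the bottom square by combining Proposition~\ref{prop: char can} for $M=\Delta_{\RR f_*}$ with Theorem~\ref{thm: exp pairing}, the identification \eqref{eqn: ch Rf}, Serre-duality nondegeneracy to test against $\beta$, and the deformation to the normal cone of $\Gamma_f$ (with normal bundle $f^*T_{X'}$ of rank $m$, giving the sign $(-1)^{\binom{m+1}{2}}$ and the factor $f^*\td(X')^{-1}$ via \eqref{eqn: rel comp}). The only cosmetic difference is that the paper pairs against $\beta\wedge\td(X')$ rather than $\beta$, which changes nothing in substance.
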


\begin{proof} By the definition of categorical Chern characters the upper rectangle is commutative. 
Consider  $\gamma  \in HH_* (\cA) $. Let $\ka := I_{HKR} (\gamma )$ and  $\ka ' :=  I_{HKR} (HH (\RR f_* ) (\gamma )) $. 
If we write $\ch (\Delta _{\RR f_*})   = \sum_i T^i \ot T_i \in   \HH ^{*} (X, (\Omega ^{\bullet}_{X}, dw))  \ot   \HH ^{*} (X', (\Omega ^{\bullet}_{X'}, -dw)) $,
then by  Proposition \ref{prop: char can} and Theorem \ref{thm: exp pairing} we have 
for  $\beta \in   \HH ^{-*} (X', (\Omega ^{\bullet}_{X'}, dw'))$
\begin{equation}\label{eqn: Rf 1}
 \int _{X'}  \ka '   \wedge \beta \wedge \td (X') = (-1)^{{n+1}\choose{2}} \sum _i  \int _X \ka \wedge T^i \wedge \td (X) \int _{X'} T_i \wedge \beta \wedge \td (X') . \end{equation}
By \eqref{eqn: ch Rf} and a normal-cone deformation argument as in \S~\ref{pf exp pairing} we have
\begin{align*} &  \mathrm{RHS} \text{ of } \eqref{eqn: Rf 1}  \\
& = (-1)^{{n+1}\choose{2}}  \int _{X\ti X' }  (\ka   \ot \beta) \wedge \ch (\cO ^{\widetilde{w}}_{\Gamma _f }) \wedge (\td (X) \ot \td (X')) \\
& = (-1)^{{n+1}\choose{2}}  \int _{f^*T_{X'}} \pi ^* (\ka \wedge f^* \beta  \wedge \td (f^* T_{X'}) \wedge  \td (X))  \wedge \ch (\mathrm{Kos} (s))  \\
& = (-1)^{\sharp }  \int _{X} \ka \wedge f^* \beta \wedge \td (X)  = (-1)^{\sharp }  \int _{X'} (\int _f \ka \wedge \td (X) ) \wedge \beta ,\end{align*} 
where $\pi$ denotes the projection $f^*T_{X'}  \to X$ and  $s$ is the diagonal section
of $\pi^*f^*T_{X'}$ on $f^*T_{X'} $.
Hence $\mathrm{LHS}\text{ of } \eqref{eqn: Rf 1}$ equals  $ (-1)^{\sharp}  \int _{X'} (\int _f \ka \wedge \td (X) ) \wedge \beta $,  which shows the commutativity of the lower rectangle.
\end{proof}

\subsection{Pushforward in Hodge cohomology}
We collect some properties of pushforwards in  Hodge cohomology that are used in \S~\ref{pf exp pairing}. For lack of a suitable reference we provide their proofs.

Throughout this subsection  $f : X \to Y$ will be a morphism  between varieties $X$, $Y$ with dimensions $n$, $m$ respectively.
Let $d= n - m$.

\subsubsection{Definition of $f_*$} 
Suppose that $f$ is a proper locally complete intersection (l.c.i) morphism. 
Let $E$ be a perfect complex on $Y$. Denote  by  \[ \tau _f : \RR f_* f^!  E \to E \] 
 the duality map in the derived category $D^+_{qc} (\cO_Y)$ of cohomologically bounded below quasi-coherent sheaves; see 
 for example  \cite[\S~4]{Lipman: Found}.
Since $f$ is l.c.i,  $f^!  \cO_Y$ is taken to be an invertible sheaf up to shift and there is 
 a canonical isomorphism $\LL f^*E \ot f^!\cO _Y \cong f^! E $. 
 For $q\in \ZZ$, let $\delta \in \HH ^q  (X, f^! E)$, which can be considered as a map $\delta : \cO _X [-q] \to f^! E $ in the derived category.
We have a composition of maps 
\[ \cO _Y [-q] \xrightarrow{natural}  \RR f_* f^* \cO _Y [-q] \xrightarrow{\RR f_* (\delta ) } \RR f_* f^! E  \xrightarrow{ \tau _f } E  , \] 
denoted by $f_* (\delta )$. This yields a homomorphism 
\[ f_* :   \HH^q (X, f^! E) \to \HH ^q  (Y , E) . \]

Let $g: Y \to Z$  be a proper l.c.i. morphism between varieties. 
The uniqueness of adjunction implies the functoriality of the pushforward 
\begin{equation}\label{eqn: gen fun}  ( g \circ f )_*  =    g_*    \circ f_*:  \HH ^q (X,  ( g \circ f )^! F ) \to \HH ^{q} (Z, F )  \end{equation}
for $F$ in $D^+_{qc} (\cO_Z)$

\subsubsection{Definitions of $\int _{f}$ and $\int _X$}\label{def int}
Let $f : X \to Y$ be a morphism between nonsingular varieties. 
For $p\ge 0$ with $p-d \ge 0$ we have a natural homomorphism 
\begin{multline*} 
  \Omega ^{p}_{X} [q]  \cong  \bigwedge^{n - p } T_X [ q] \ot f^* \Omega _Y^m [-d] \ot f^! \cO _Y  \\
 \xrightarrow{}  \bigwedge^{n - p } f^* T_Y [ q] \ot f^* \Omega _Y^m [-d]  \ot f^! \cO _Y  \cong  f^* \Omega _Y^{p-d} [q-d]  \ot f^! \cO _Y  .
  \end{multline*} denoted by  $\mathscr{T} _f.$ 

We define {\em Hodge cohomology with proper supports along $f$} as the direct limit:
 \[ H^q_{cf} (X, \Omega ^{p} _{X}) := \lim_{\longrightarrow} H^q_Z (X,  \Omega ^{p} _{X}) , \]
where $Z$ runs over all closed subvarieties of $X$ that are proper over $Y$.
By Nagata's compactification and the resolution of singularities
there is a nonsingular variety $\bar{X}$ including $X$ as an open subvariety 
and a proper morphism $\bar{f}: \bar{X} \to Y$ extending $f$. 
Recall the fact that if $Z$ is a closed subvariety of $X$ that is proper over $Y$, then $Z$ is a closed subvariety of $\bar{X}$. Let
   \[ nat:  H^q_{\natural _1} (X, \Omega ^{p} _{X}) \to H^q_{\natural _2} (\bar{X}, \Omega ^{p}_{\bar{X}})   \]  be the natural map where $(\natural _1, \natural _2)$ is either $(c, c)$
or $(cf, \emptyset)$.
We define the pushforward (for $p\ge 0$ with $p-d \ge 0$)
\begin{equation}\label{def: gen push} \int _{f}   : H^q _{\natural _1} (X, \Omega ^{p} _{X}) \to H^{q-d}_{\natural _2} (Y, \Omega ^{p-d} _Y ) ; \gamma \mapsto \bar{f}_* ( \mathscr{T}_f (nat (\gamma ))) . \end{equation}
 Using the functoriality \eqref{eqn: gen fun}, we note that  $\int _{f} $ is independent of the choices of $\bar{X}$, an open immersion $X\hookrightarrow \bar{X}$, and an extension $\bar{f}$.
When $Y=\Spec k$, we also write $\int _{X}$ for $\int _{f}$

If $ v : X' \to X$ be a proper morphism between nonsingular varieties, we have the  natural pullback map
\[ v^* : H^q_c (X, \Omega ^p_{X}) \to H^q_c (X' , \Omega ^p_{X'}) . \]

\subsubsection{Base change I}
Consider a fiber square diagram of varieties 
\begin{equation}\label{diag: fiber diag} \xymatrix{ X' \ar[r]^{v} \ar[d]_g & X \ar[d]^f \\
                       Y' \ar[r]_u & Y .  } \end{equation} 
                       Assume that $f$ is a flat, proper, l.c.i morphism.
                       Then from the base change \cite[\S~4.4]{Lipman: Found} we 
                       obtain a base change formula,   for  $\delta \in H^q (X, f^! \cO_Y ) $
\begin{align}\label{eqn: base}  g_*   (\LL v^* (\delta )) =   \LL u^* ( f_* ( \delta ) )   \end{align}
in $H^q (Y', \cO _{Y'})$.
   Here $\LL v^*(\delta ) \in  H^ q (X' , g^! \cO _Y) $  is the naturally induced map  
   \[ \cO_{X'} [-q] \to  \LL v^* f^! \cO_Y   \cong g^! \LL u^* \cO_Y  = g^! \cO _{Y'} \] in the derived category.

Furthermore suppose that  all varieties $X, Y, X'$ are nonsingular and $Y'$ is a closed point of $Y$. Then for $\gamma \in H^d _{c} (X, \Omega ^d _{X})$ we easily check that
\[  v^* (\gamma ) = \LL v^* (\mathscr{T}_f (\gamma ))   \] in  $H^d _{c} (X', \Omega ^d _{X'}) = H^0 _{c} (X', g^! \cO_{Y'}) $.
Hence \eqref{eqn: base} for $\delta = \mathscr{T}_f (\gamma)$ means that
   \begin{align}\label{eqn: smooth base}  \int _{X'}    v^* (\gamma) =   u^* (\int _f \gamma )  . \end{align}
%If $Y$ is complete, then $H^0 (Y, \cO_Y) = k$

    \subsubsection{Base change II} 
  Let  $Y$ be a connected nonsingular  complete curve and let $Y'$ be  a closed point of $Y$.
Consider the fiber square diagram \eqref{diag: fiber diag} of nonsingular varieties. 
Assume that $f$ is flat but possibly non-proper. 
          
   \begin{lemma} \label{lem: fiber base}
  For $ \gamma \in H^d _c (X, \Omega ^d _X)$ we have
  \begin{equation}\label{eqn: fiber base}  \int _{X'} v ^* (\gamma )   =  \int _{f} \gamma \quad \in k .  \end{equation}
   \end{lemma} 
\begin{proof} 
By Nagata's compactification  $f$ is extendible to a proper flat morphism
                       $\bar{f}: \bar{X} \to Y$ with an open immersion $X\hookrightarrow \bar{X}$. By the resolution of singularities we can make that $\bar{X}$ is nonsingular 
                     and  the closure $\bar{X'}$ of $X'$ in $\bar{X}$ is also nonsingular. Let $\bar{v} : \bar{f}^{-1} (Y') \to \bar{X}$ be the induced morphism and let  $\bar{v}_{\circ} := \bar{v} |_{\bar{X'}} $.
                    Thus we have a commutative diagram
                     \begin{equation}\label{diag: comp fiber diag} \xymatrix{ \bar{X'}\  \ar@{^{(}->}[r] \ar[rd]_{\bar{g}_{\circ} } \ar@/^1.3pc/[rr]^{\bar{v}_{\circ}}  
                     & \bar{f}^{-1}(Y') \ar[r]_(.6){\bar{v}} \ar[d]_{\bar{g}} & \bar{X} \ar[d]^{\bar{f}} \\
                     &   Y' \ar[r]_u & Y ,  } \end{equation} with a fiber square. 
To show \eqref{eqn: fiber base} we may assume  $ \gamma \in H^d _Z (X, \Omega ^d _X)$ for some complete subvariety $Z$ of $X$.
Let $nat$ denote the natural map $H^d _Z (X, \Omega _X^d) \to H^d (\bar{X} , \Omega _{\bar{X}}^d)$. Then by the support condition of $\gamma$ 
we have
\begin{equation}\label{eqn: LH RH}
 (\bar{g}_{\circ})_*  \LL \bar{v}_{\circ}^* (\mathscr{T}_{\bar{f}} ( nat (\gamma ) )) 
= \bar{g}_*  \LL \bar{v}^* (\mathscr{T}_{\bar{f}} ( nat (\gamma ) ))   \in k . \end{equation}
Since $ \bar{v}_{\circ}^* ( nat ( \gamma )) =  \LL \bar{v}_{\circ}^* (\mathscr{T}_{\bar{f}} (nat(\gamma )) )  $ under $\Omega ^d _{\bar{X'}} \cong g^! \cO _{Y'}$,
LHS of  \eqref{eqn: LH RH} becomes $\int _{\bar{X'}} \bar{v}_{\circ}^* ( nat ( \gamma )) $, which equals to LHS of \eqref{eqn: fiber base}  by the support condition of $\gamma$.
On the other hand by \eqref{eqn: base}, RHS of \eqref{eqn: LH RH} becomes  $u^* \int _{\bar{f}} nat (\gamma) $, which equals to RHS of \eqref{eqn: fiber base} by the support condition and
$H^0 (Y, \cO_Y) = k$.
\end{proof}

\subsubsection{Projection formula} 
Let $X$, $Y$, $Z$  be nonsingular varieties and let $f: X \to Y$, $g: Y\to Z$  be morphisms. Let $d' = \dim Y - \dim Z$.
The uniqueness of adjunction implies the functoriality of the pushforward, for $p\ge 0$ with $p-d \ge 0$ and $p-d - d' \ge 0$
\begin{equation}\label{eqn: fun} \int_{g\circ f}  = \int _g   \circ \int _f  :  H^q_c (X, \Omega ^{p} _X) \to H^{q-d- d'} _c (Z, \Omega ^{p-d-d'} _Z ) . \end{equation}

Let $f:X \to Y$ be a (possibly non-proper) morphism between nonsingular varieties. 
Then for  $\gamma \in H^d _{cf} (X, \Omega ^d _{X})$ and $\sigma  \in H^{q} (Y, \Omega ^{p} _{Y})$
the projection formula
\begin{align}\label{eqn: proj}  \int _f (  f^* \sigma \wedge \gamma )  = \sigma  \wedge \int _f  \gamma     \end{align}
holds in $H^q (Y, \Omega ^p _{Y} )$. This can be verified as follows. We may assume that $f$ is proper. Consider the commuting diagram
\[ \xymatrix{ \Omega _{Y} ^{p}   \ar[r]  % \ar@/^3pc/[rrrr] 
& \RR f_* \Omega _{X} ^{p}   \ar[rr]^{\RR f_* (\cdot \wedge \gamma ) \ \ \ \  \  }  & 
& \RR f_* ( \Omega _{X} ^{p}  \wedge \Omega ^{d}_{X} [d])  \ar[rr]^{\RR f_*(\mathscr{T}_f ) }     &        &        \Omega ^{p}_{Y} \ot \RR f_* f^!\cO_Y   \\
 \cO_Y [-q] \ar[u]^{\sigma} \ar[r] & \RR f_* \cO _X \ar[u]^{\RR f_* f^* \sigma } \ar@/^.7pc/[urr]_(0.4){\ \ \ \RR f_* (f^* \sigma \wedge \gamma)}  
       \ar@/_1pc/[rrrru]_{\ \ \RR f_* (\mathscr{T}_f (f^*\sigma \wedge \gamma))}
 & & &  &   }    \] We note that the composition of the maps in the top horizontal line is $\mathrm{id}_{\Omega ^p _Y} \ot \RR f_* ( \mathscr{T}_f ( \gamma))$
 using the generic smoothness of $f$ and local coordinate systems for compatible bases of $\Omega ^1 _{X}$ and  $\Omega ^1_Y$.
 The clockwise compositions of maps starting from $\cO _Y[-q]$ followed by $\tau _f$ yields LHS of \eqref{eqn: proj} and 
 the counterclockwise compositions of maps followed by $\tau _f$ yields RHS of \eqref{eqn: proj}.

\subsubsection{Some computations}

Let  $Q$ be the tautological quotient bundle on the projective space $\PP^n$. We want to compute $\int _{\PP^n}$ of the top Chern class $c_{n} (Q) \in H^0 (\PP ^n, \Omega ^n _X [n ] )$.   
The class $c_{n} (Q)$  is equal to $(-1)^nc_1(\cO (-1))^n$. Let $U_i = \{ x_i \ne 0 \}$ where $x_0, ..., x_n$ are homogeneous coordinates. 
On each $U_i$, we may identify $\cO (-1)$ with the $i$-th component of $\cO _{\PP^n} ^{\oplus n+1}$ by the tautological monomorphism 
$\cO (-1) \to \cO _{\PP^n} ^{\oplus n+1}$. This yields connections $\nabla _i$ on $\cO (-1) |_{U_i}$.
Let $z_i = x_i / x_0$. 
Note that $\nabla _0 - \nabla _i = - \frac{dz_i}{z_i}$.  
Hence $\nabla _i - \nabla _j =   \frac{dz_i}{z_i}  - \frac{dz_j}{z_j}$ on %$U_{0,i,j} := 
$U_0 \cap U_i \cap U_j$.
By the $n$-th fold  Alexander-\v{C}ech-Whitney cup product of a \v{C}ech representative $(\nabla _i - \nabla _j ) _{i<j}$ of $c_1(\cO (-1))$ we conclude that 
$ c_n (Q) $ is representable by a \v{C}ech cycle
\begin{equation*}   (-1)^{ \binom{n+1}{2}}   \frac{dz_1 ...  dz_n}{z_1 ... z_n}  \in  \Omega ^n _{\PP^n} (U _0 \cap ... \cap U_n) . \end{equation*}
Here the sign contribution of $ \binom{n}{2}$ among $ \binom{n+1}{2}$ 
comes from the exchanges of odd \v{C}ech `elements' and differential one forms $\frac{dz_i}{z_i}$; see  \cite{BW: ShkConj,  CKK}.
Thus \begin{equation}\label{eqn: comp} \int _{\PP ^n_k} c_n (Q) =   (-1)^{ \binom{n+1}{2}} \mathrm{res}  [ \frac{dz_1 ...  dz_n}{z_1  ...  z_n} ] = (-1)^{ \binom{n+1}{2}} . \end{equation}

Let $E$ be a rank $n$ vector bundle on a nonsingular variety $X$ and let $\pi: E\to X$ be the projection. 
We have the diagonal section $s$ of $\pi ^* E$ by letting $s(e) = (e, e)$. 
 Let $\bar{\pi} : \PP (E \oplus \cO_X ) \to X$ be the projection, which is a proper extension of $\pi$:
 \[ \PP (E \oplus \cO_X )  = \PP (E) \sqcup E \supset E \supset \PP (\cO_X) = X .  \]
Let $\cQ$ be the tautological quotient bundle on $\PP (E \oplus \cO_X )$.
It has a section $\bar{s}$ by the composition $\cO \xrightarrow{(0, -\mathrm{id})}  \pi ^*E \oplus \cO \xrightarrow{quot} \cQ$. 
Note that the zero locus $\bar{s}$ is $\PP (\cO_X)$, since $(0, - \mathrm{id})$ is factored through the kernel of $quot$ exactly on $\PP (\cO _X)$. 
Note that the composition $quot \circ (\mathrm{id}, 0) |_{E} : \pi ^* E \to \cQ |_{E}$ is an isomorphism sending $s$ to $\bar{s}|_E$. Therefore we have
\begin{multline}\label{eqn: rel comp}
  \int _{\pi} \ch (\mathrm{Kos} (s)) \td (\pi^* E)    
 = \int _{\bar{\pi}} \ch (\mathrm{Kos} (\bar{s})) \td (\cQ) \\
 = \int _{\bar{\pi}} c_{n} (\cQ) \text{ (by letting $\bar{s}=0$)} 
 =    (-1)^{ \binom{n+1}{2}} \text{ (by \eqref{eqn: smooth base}  \& \eqref{eqn: comp})}. \end{multline}

\end{document}